\theoremstyle{plain}
\newtheorem{thm}{Theorem}
\newtheorem{prop}[thm]{Proposition}
\newtheorem{lm}[thm]{Lemma}
\newtheorem{cor}[thm]{Corollary}
\theoremstyle{definition}
\newtheorem{exm}[thm]{Example}
\newtheorem{df}[thm]{Definition}
\theoremstyle{remark}
\newtheorem{remark}[thm]{Remark}
\renewcommand{\leq}{\leqslant}
\newcommand{\rto}{\rightarrow}
\newcommand{\id}{\operatorname{Id}}
\DeclareMathOperator{\Cld}{Cld}
\DeclareMathOperator{\Pow}{Pow}
\newcommand{\F}{\mathcal F}
\newcommand{\op}{\operatorname}
\newcommand{\qedc}{{\qed}~{\rm Claim~{\theclaim}.}}
\begin{document}

\title{Algebraic convex geometries revisited}
\author {K. Adaricheva}
\address{School of Science and Technology, Nazarbayev University,
53 Kabanbay Batyr ave., Astana, Republic of Kazakhstan, 010000}
\email{kira.adaricheva@nu.edu.kz}
\address{Department of Mathematical Sciences, Yeshiva University,
New York, NY 10016, USA}
\email{adariche@yu.edu}

\keywords{Algebraic closure operator, convex geometry, anti-exchange closure, weakly atomic lattices, algebraic lattices}
\subjclass[2010]{06A15, 08A70, 06B99, 6R99}
\thanks{The work on this paper was partially supported by grant of Nazarbayev University N 13/42.}

\begin{abstract}
Representation of convex geometry as an appropriate join of compatible total orderings of the base set can be achieved, when closure operator of convex geometry is algebraic, or finitary. This bears to the finite case proved by P.~H.~Edelman and R.~E.~Jamison to the greater extent than was thought before.
\end{abstract}

\maketitle

\section{Introduction}
This paper is stimulated by the work of the author on the chapters for the second volume of the Special topics and Applications series following new edition of G.~Gr\"atzer's book on lattice theory \cite{GLTF}, the first volume \cite{LTSP1} of the series  appeared not long ago. One of the chapters in the upcoming second volume is devoted to \emph{convex geometries}, combinatorial objects that appear in various areas of mathematics in different disguise, see survey by B.~Monjardet \cite {Mo85}.  Treated in discrete mathematics as (finite) set systems, similar to \emph{matroids}, and dual to \emph{antimatroids}, convex geometries were also recognized as lattices with unique irredundant decompositions, see R.~P.~Dilworth \cite{D2}, but also closure systems embracing the \emph{anti-exchange property}. In geometry, these systems epitomized the concept of convexity in its discrete form. In recent years, more development was done for the infinite convex geometries and anti-matroids, see \cite{AGT03, AN16, AP11, A09, S15, MS00}. In particular, this development can be seen as the natural generalization of geometrical representations for the closure systems with the anti-exchange axiom associated with the convex bodies in Euclidean spaces, such treatment given, in particular, in \cite{AGT03,AP11}.   

In current paper we consider the generalization of the classical result of P.~H.~Edelman and 
R.~E.~Jamison \cite{EJ85}, which gives representation of an arbitrary finite convex geometry as a join of ``elementary" convex geometries that can be defined on the same base set. Such elementary sub-geometries can be extracted from original geometry by considering the maximal chains connecting the smallest and largest elements of the geometry. At the same time, such chains define a particular ordering on the base set of the convex geometry, which are called \emph{compatible} orderings. Many important examples of convex geometries in infinite case satisfy the \emph{finitary condition} for their closure operator, equivalently, as lattices they are \emph{algebraic}. It turns out that under these additional condition the Edelman-Jamison representation can be generalized smoothly, and we had it in notes for a number of years, until the publication of N. Wahl \cite{W01} came to our attention. 

While the current paper revisits the main topic of \cite{W01}, it contains mostly new results, which also go beyond just representation, and establish important properties of convex geometries in algebraic case. The only borrowed result is Lemma \ref{max-chains}, which is proved in \cite[Theorem 2]{W01}. In particular, our Lemma \ref{Xordering} and Theorem \ref{v-rep} seem to navigate proper sail from the  finite case into realm of algebraic. 

Preliminaries on convex geometries and algebraic closure operators are given in sections \ref{ConGeo} and \ref{AlgCloOper}.

\section{Convex geometries}\label{ConGeo}
Let $X$ be a non-empty set, and $\op{Pow} X$ be the set of all subsets of $X$. We know that with respect to the order relation $\subseteq$, $\op{Pow} X$ has a structure of a complete boolean lattice. In this paper, we will be interested in considering algebraic closure operators, or some of their generalizations.

\begin{df}\label{algCO}
A mapping $\phi : \op{Pow} X \longrightarrow \op{Pow} X$ is called an \emph{algebraic (or finitary) closure operator} on set $X$, if for all $A,B\subseteq X$
\begin{itemize}
\item[(1)] $A \subseteq \phi(A)$;
\item[(2)] if $A\subseteq B$, then $\phi(A)\subseteq \phi(B)$;
\item[(3)] $\phi(\phi(A))=\phi(A)$;
\item[(4)] $\phi(A)=\bigcup \{\phi(B): B\subseteq A, |B|<\omega\}$.
\end{itemize}
\end{df}
While the first three properties say that $\phi$ is a closure operator on set $X$, the last property indicates that closures of arbitrary sets are fully defined by closures of their finite subsets.\\
Apparently, in case of finite $X$, the last property trivially holds, so it only needs to be considered for infinite $X$.

Now we introduce the convex geometries.
A pair $(X,\phi)$ will be called \emph{a closure space}, if $\phi$ is a closure operator on $X$; also, $\phi$ is \emph{zero-closure} operator, if $\phi(\emptyset)=\emptyset$.

\begin{df}\label{CG}\cite{AGT03}
A zero-closure space $(X, \phi)$ satisfies the \emph{anti-exchange
property} if the following statement holds,
 \begin{equation}
 \begin{aligned}
 x\in\phi(A\cup\{y\})\text{ and }x\notin A
 \text{ imply that }y\notin\phi(A\cup\{x\})\\
 \text{ for all }x\neq y\text{ in }X\text{ and all closed }A\subseteq X.
 \end{aligned}
 \tag{AEP}
 \end{equation}
We then say that $(X,\phi)$ is a \emph{convex geometry}.
\end{df}

Often convex geometries were treated as \emph{set systems}, i.e. pairs $(X, \mathcal {F})$, where $\mathcal{F} \subseteq  \op{Pow} X$ satisfies the following properties:

\begin{itemize}
\item[(1)] $\emptyset, X \in \mathcal{F}$;
\item[(2)] $\mathcal{F}$ is stable under intersection, i.e. $A,B \in \mathcal{F}$ implies $A\cap B \in \mathcal{F}$;
\item[(3)] for every $Y\in \mathcal{F}$, such that $Y\not = X$, there exists $x \in X\setminus Y$ such that $Y\cup\{x\}\in \mathcal{F}$.
\end{itemize}

This latter description of finite convex geometries is equivalent to one given in Definition \ref{CG}, if one treats the elements of $\mathcal{F}$ as the family $\op{Cld} (X,\phi)$ of closed subsets of $X$ with respect to closure operator $\phi$. Namely, $\mathcal{F}=\op{Cld} (X,\phi)=\{Y\subseteq X: \phi(Y)=Y\}$. Note that properties $(1),(2)$ characterize the closed subsets of arbitrary zero-closure operator, while property $(3)$ is responsible to reflect the anti-exchange property of closure operator.

Another standard observation is that $(\op{Cld} (X,\phi), \cap)$ is a (lower) semilattice, which is a subsemilattice of $(\op{Pow} X, \cap)$. All (lower) subsemilattices of $(\op{Pow} X, \cap)$ form a partially ordered set with respect to containment order $\subseteq$, this partially ordered set is in fact a lattice and denoted $\op{Sg}_{\bigwedge} (\op{Pow} X)$. When one passes from finite to arbitrary $X$, the only change one needs to make is to consider families $\mathcal{F} \subseteq  \op{Pow} X$ which are stable under \emph{arbitrary} intersections, thus, $(\mathcal{F}, \bigcap)$ becomes \emph{complete} semilattice and an element in $\op{Sg}_{\bigwedge} (\op{Pow} X)$, the latter should be treated as the lattice of \emph{complete} (lower) subsemilatticies of  $(\op{Pow} X, \cap)$.

Consider now fixed (finite) set $X$ and some $\mathcal{F,G}\in \op{Sg}_{\bigwedge} (\op{Pow} X)$.  The following provides the description of the join of $\mathcal{F,G}$ in lattice $\op{Sg}_{\bigwedge} (\op{Pow} X)$:
\[ 
\mathcal{F} \vee \mathcal{G}= \{ F\cap G \in \op{Pow} X: F \in \mathcal{F}, G \in \mathcal{G}\}.
\]

One of key observations about the formation of different convex geometries on the shared base set $X$ is the following result from P.~H.~Edelman \cite[Th.2.2]{E80}.

\begin{prop} If  $(X,\mathcal{F})$ and $(X,\mathcal{G})$ are two convex geometries on $X$, then $(X, \mathcal{F}\vee\mathcal{G})$ is also a convex geometry on $X$.
\end{prop}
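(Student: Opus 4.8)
The plan is to verify directly, via Definition \ref{CG}, that the closure operator on $X$ whose closed sets form $\F\join\G$ is a zero-closure operator satisfying (AEP); working with the closure operator rather than with the three-axiom description of set systems makes the argument almost formal. Denote by $\phi_{\F}$ and $\phi_{\G}$ the closure operators with $\op{Cld}(X,\phi_{\F})=\F$ and $\op{Cld}(X,\phi_{\G})=\G$, and let $\phi$ be the closure operator whose closed sets form $\F\join\G$. (That $\F\join\G$ really is the family of closed sets of a zero-closure operator is immediate: it contains $\emptyset=\emptyset\cap\emptyset$ and $X=X\cap X$, and $(F_1\cap G_1)\cap(F_2\cap G_2)=(F_1\cap F_2)\cap(G_1\cap G_2)\in\F\join\G$, so it is stable under intersections.)

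The first step is to identify $\phi$ explicitly: $\phi(A)=\phi_{\F}(A)\cap\phi_{\G}(A)$ for every $A\subseteq X$. Indeed, since $X\in\G$ we have $\phi_{\F}(A)=\phi_{\F}(A)\cap X\in\F\join\G$, and likewise $\phi_{\G}(A)\in\F\join\G$, so $\phi_{\F}(A)\cap\phi_{\G}(A)\in\F\join\G$; it contains $A$; and if $H=F\cap G\in\F\join\G$ contains $A$ then $F\supseteq\phi_{\F}(A)$ and $G\supseteq\phi_{\G}(A)$, whence $H\supseteq\phi_{\F}(A)\cap\phi_{\G}(A)$. Thus $\phi_{\F}(A)\cap\phi_{\G}(A)$ is the least member of $\F\join\G$ containing $A$, i.e. it equals $\phi(A)$. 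In particular $\phi(\emptyset)=\phi_{\F}(\emptyset)\cap\phi_{\G}(\emptyset)=\emptyset$, so $\phi$ is a zero-closure operator, and every $\phi$-closed set $A$ satisfies $A=\phi_{\F}(A)\cap\phi_{\G}(A)$.

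It then remains to check (AEP) for $\phi$. Let $A$ be $\phi$-closed and let $x\neq y$ with $x\notin A$ and $x\in\phi(A\cup\{y\})$. Since $A=\phi_{\F}(A)\cap\phi_{\G}(A)$ and $x\notin A$, we have $x\notin\phi_{\F}(A)$ or $x\notin\phi_{\G}(A)$; assume $x\notin\phi_{\F}(A)$ (the other case is symmetric, interchanging $\F$ and $\G$). Put $F=\phi_{\F}(A)$, which is $\phi_{\F}$-closed. Using $A\subseteq F$, monotonicity of $\phi_{\F}$, and $\phi(Z)\subseteq\phi_{\F}(Z)$ for all $Z$,
\[
x\in\phi(A\cup\{y\})\subseteq\phi_{\F}(A\cup\{y\})\subseteq\phi_{\F}(F\cup\{y\}).
\]
Now $F$ is closed in $(X,\F)$, $x\notin F$ and $x\neq y$, so (AEP) for the convex geometry $(X,\F)$ yields $y\notin\phi_{\F}(F\cup\{x\})$. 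Since $\phi(A\cup\{x\})\subseteq\phi_{\F}(A\cup\{x\})\subseteq\phi_{\F}(F\cup\{x\})$, it follows that $y\notin\phi(A\cup\{x\})$, which is precisely (AEP) for $\F\join\G$. Hence $(X,\F\join\G)$ is a convex geometry.

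I do not expect a genuine obstacle here: the whole argument is concentrated in the reduction ``$x\notin A$ forces $x\notin\phi_{\F}(A)$ or $x\notin\phi_{\G}(A)$,'' which rests only on the identity $\phi(A)=\phi_{\F}(A)\cap\phi_{\G}(A)$ together with $A$ being $\phi$-closed, after which (AEP) for one of the two given geometries does all the work. I would note in passing that a direct verification of axiom $(3)$ of the set-system description — producing, for $Y\in\F\join\G$ with $Y\neq X$, an $x\notin Y$ with $Y\cup\{x\}\in\F\join\G$ — is \emph{less} convenient, since enlarging $\phi_{\F}(Y)$ by one admissible element need not enlarge $Y$ inside $\phi_{\G}(Y)$; the closure-operator formulation sidesteps exactly this annoyance.
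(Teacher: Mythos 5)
Your argument is correct. Note, though, that the paper gives no proof of this Proposition at all --- it is quoted from Edelman \cite{E80} --- so the right comparison is with the closest in-paper analogue, namely the first half of the proof of Theorem \ref{JoinSp}, which generalizes the statement to arbitrary families of (algebraic) geometries. Both arguments share the same skeleton: identify the closure operator of the join as the pointwise intersection, $\phi(A)=\phi_{\mathcal F}(A)\cap\phi_{\mathcal G}(A)$ (in the paper, $\psi(A)=\bigcap_{i}\phi_i(A)$), and then pull (AEP) back from one of the constituents. The difference is in the reduction step: the paper insists on finding a single index $i$ with \emph{both} $x,y\notin\phi_i(A)$, and secures this by a contradiction argument that splits the index set and builds auxiliary operators $\tau$ and $\phi$; you observe that, under the form of (AEP) in Definition \ref{CG} (which hypothesizes only $x\notin A$), it suffices to have $x\notin\phi_{\mathcal F}(A)$ for one of the two operators, and that this is immediate from $x\notin A=\phi_{\mathcal F}(A)\cap\phi_{\mathcal G}(A)$. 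This is a genuine simplification, not a gap: even if one prefers the stronger form of anti-exchange with $y\notin F$ as an additional hypothesis, that condition comes for free, since $y\in F=\phi_{\mathcal F}(A)$ would give $x\in\phi_{\mathcal F}(F\cup\{y\})=F$, contradicting $x\notin F$. Your argument also transfers verbatim to arbitrary joins $\bigcap_{i\in I}\phi_i$, so it would shorten the first part of the proof of Theorem \ref{JoinSp} as well; what the paper's longer route buys is only the (unused elsewhere) stronger conclusion that $x$ and $y$ are simultaneously outside $\phi_i(A)$ for one and the same $i$.
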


The next important observation about finite convex geometries is that every convex geometry $(X, \mathcal{F})$ with $X=\{x_1,\dots, x_n\}$ contains at least one subfamily $\mathcal{F}^\sigma \subseteq \mathcal{F}$ of the form $\mathcal{F}^\sigma=\{\emptyset, \{x_{\sigma(1)}\}, \{x_{\sigma(1)}, x_{\sigma(2)}\}, \dots , \{x_{\sigma(1)}, x_{\sigma(2)}, \dots, x_{\sigma(n-1)}\}, X\}$, for some permutation $\sigma$ on $\{1,2,\dots, n\}$. Indeed, this follows from property (3) of convex geometry as the set system. Note that $(X, \mathcal{F}^\sigma)$ is itself a convex geometry on $X$, which is called \emph{monotone alignment} in \cite{EJ85}. It is associated with the particular total order $\prec$ on $X$: $x_{\sigma(1)} \prec x_{\sigma(2)}\prec \dots \prec x_{\sigma(n-1)} \prec x_{\sigma(n)}$, which is naturally associated with subfamily $\mathcal{F}^\sigma$. Namely, elements of $\mathcal{F}^\sigma$ are down-sets of totally ordered set $(X,\prec)$, and the convex geometry of down-sets of this $X$ will be denoted $\id (X,\prec)$. Since $\mathcal{F}^\sigma \subseteq \mathcal{F}$, such total  ordering on $X$ is called \emph{compatible} with the given convex geometry. It is easy to observe that $\F^\sigma$ forms the longest chain in $\F$. Moreover, all the longest chains in $\F$ are all of this form and have the length $|X|$.

In the following result of P.~H.~Edelman and R.~E.~ Jamison convex geometries of down-sets of some total orderings on $X$ play the role of ``elementary" convex geometries that generate any given convex geometry on $X$.

\begin{thm}\cite[Th. 5.2]{EJ85}\label{EJ} Let $G=(X,\mathcal{F})$ be a closure system on finite set $X$. The following are equivalent:
\begin{itemize}
\item[(1)] $G$ is a convex geometry;
\item[(2)] $G = \bigvee \{\id(X,\prec_i): i \in I\}$, where $\{\prec_i: i \in I\}$ is set of compatible total orderings on set $X$.
\end{itemize}
\end{thm}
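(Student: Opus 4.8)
The plan is to prove the equivalence $(1)\Leftrightarrow(2)$ by establishing each implication separately, leaning on the structural facts about $\operatorname{Sg}_{\bigwedge}(\operatorname{Pow} X)$ and the two standing observations already recorded in the excerpt: that the join of convex geometries is a convex geometry (Edelman's Proposition), and that every finite convex geometry contains a subfamily $\mathcal{F}^\sigma = \operatorname{Id}(X,\prec_\sigma)$ for at least one compatible ordering.

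\medskip

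For the implication $(2)\Rightarrow(1)$, the argument is short. Each $\operatorname{Id}(X,\prec_i)$ is a convex geometry on $X$: the down-sets of a total order are closed under arbitrary intersection and contain $\emptyset$ and $X$, and property $(3)$ of the set-system definition holds because any proper down-set $Y$ can be enlarged by adjoining the $\prec_i$-least element of $X\setminus Y$. Then I would invoke Edelman's Proposition inductively — or note that the lattice-theoretic join formula $\mathcal{F}\vee\mathcal{G}=\{F\cap G : F\in\mathcal{F},\ G\in\mathcal{G}\}$ propagates the convex-geometry property through finite joins, and since $X$ is finite there are only finitely many distinct families $\operatorname{Id}(X,\prec_i)$, so the join over $I$ is really a finite join. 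Hence $G$ is a convex geometry.

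\medskip

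For the harder direction $(1)\Rightarrow(2)$, let $G=(X,\mathcal{F})$ be a convex geometry. Let $\{\prec_i : i\in I\}$ be the set of \emph{all} total orderings on $X$ compatible with $G$, i.e. those for which $\operatorname{Id}(X,\prec_i)\subseteq\mathcal{F}$; this set is non-empty by the second standing observation. Put $\mathcal{H}=\bigvee_{i\in I}\operatorname{Id}(X,\prec_i)$. Since each joinand is contained in $\mathcal{F}$ and $\mathcal{F}\in\operatorname{Sg}_{\bigwedge}(\operatorname{Pow} X)$, monotonicity of join gives $\mathcal{H}\subseteq\mathcal{F}$. The crux is the reverse inclusion $\mathcal{F}\subseteq\mathcal{H}$: I must show every closed set $Y\in\mathcal{F}$ is an intersection $\bigcap_i D_i$ with $D_i\in\operatorname{Id}(X,\prec_i)$. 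The natural approach is to show that for each $Y\in\mathcal{F}$ and each point $z\in X\setminus Y$ there is a compatible ordering $\prec_i$ and a down-set $D_i\in\operatorname{Id}(X,\prec_i)$ with $Y\subseteq D_i$ and $z\notin D_i$; intersecting such $D_i$ over all $z\notin Y$ recovers $Y$ exactly. To build such an ordering, I would use the anti-exchange property: starting from the closed set $Y$, repeatedly apply property $(3)$ of the set system to extend $Y$ one point at a time up to $X$ — this produces a maximal chain in $\mathcal{F}$ above $Y$ — and symmetrically extend downward through the closed sets contained in $Y$, arranging that $z$ is adjoined only after all of $Y$ has been built. The concatenation of these extension steps is a maximal chain $\emptyset=F_0\subset F_1\subset\cdots\subset F_n=X$ in $\mathcal{F}$ passing through $Y$; this chain has the form $\mathcal{F}^\sigma$ for a compatible ordering $\prec_i$, with $Y=F_k$ a down-set and $z\in F_{k+j}\setminus F_k$ for some $j\ge1$, so $D_i=F_k=Y$ works.

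\medskip

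The main obstacle is the downward/upward chain-building step: I need to confirm that, given a closed set $Y$ and a point $z\notin Y$, property $(3)$ can always be applied so as to postpone the appearance of $z$ until after $Y$ is exhausted — equivalently, that $Y$ itself occurs as a member of some maximal chain in $\mathcal{F}$ not containing $z$ among its first $|Y|$ terms. Upward this is immediate since $Y$ is already closed and we extend from $Y$; the care is in checking that the extension steps from $Y$ to $X$ do not force $z$ in prematurely, and that below $Y$ we can reach $\emptyset$ through closed sets (which is automatic as the family of closed subsets of $Y$ is itself a finite convex geometry on $Y$, so has its own maximal chain down to $\emptyset$). Once the chain is in hand, identifying it with $\operatorname{Id}(X,\prec_i)$ for a compatible $\prec_i$ and reading off the required down-set is routine, completing $\mathcal{F}\subseteq\mathcal{H}$ and hence $(1)\Rightarrow(2)$.
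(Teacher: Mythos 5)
Your proposal is correct and follows essentially the same route as the paper's treatment of this result (stated by citation to Edelman--Jamison and reproved in generalized form in Theorem \ref{v-rep}): the join of the ideal geometries $\id(X,\prec_i)$ of compatible orderings is again a convex geometry, and conversely every closed set lies on a maximal chain of $\mathcal{F}$, which is exactly $\id(X,\prec_\sigma)$ for a compatible ordering, so the union of the joinands already exhausts $\mathcal{F}$. The ``main obstacle'' you flag (postponing $z$) is actually moot, since $Y$ itself occurs on the maximal chain and is therefore the required down-set separating $Y$ from every $z\notin Y$ at once --- which is precisely the simplification the paper exploits.
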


We will consider the generalization of this result in section \ref{Gen}.
 
\section{Algebraic closure operators}\label{AlgCloOper}

As we noticed before, every closure operator $\phi$ on set $X$ is uniquely associated with the family $\op{Cld} (X,\phi) \subseteq \op{Pow} X$ of its closed subsets: $Y \in \op{Cld} (X,\phi)$ iff $\phi(Y)=Y$. We would like to identify the families $\mathcal{C}\subseteq \op{Pow} X$ that are represented as $\op{Cld} (X,\phi)$, for some \emph{algebraic} closure operator $\phi$ on $X$.

We recall that subfamily $\mathcal{F} \subseteq \op{Pow} X$ is called \emph{algebraic}, if
\begin{itemize}
\item [(i)] $\F$ is stable under arbitrary intersections, i.e. $\bigcap X_i \in \mathcal{F}$, for any $X_i \in \mathcal{F}$, $i \in I$;
\item[(ii)] $\bigcup X_i \in \mathcal{F}$, for any non-empty up-directed family $X_i \in \mathcal{F}$, $i \in I$.
\end{itemize}

Here the family $X_i$, $i\in I$, of elements $\op{Pow} X$ is called \emph{up-directed}, if for any $X_i,X_j$ there exists another member of the family $X_k$ such that $X_i\cup X_j\subseteq X_k$.
Item (i) allows empty family, for which $\bigcap \emptyset = X$, thus, $X \in \mathcal{F}$, for every algebraic $\mathcal{F}$. The following statement represents the common knowledge, the proof may be checked, for example, in \cite{BS81}. 

\begin{lm}\label{alg} Family $\mathcal{F}\subseteq \op{Pow} X$ is represented as $\op{Cld} (X,\phi)$, for some algebraic closure operator $\phi$ iff $\mathcal{F}$ is an algebraic subset of boolean lattice $\op{Pow} X$.
\end{lm}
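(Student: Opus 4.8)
The plan is to prove the two implications of Lemma \ref{alg} separately, in each direction exploiting the tight correspondence between the finitary axiom (4) of Definition \ref{algCO} and the up-directedness clause (ii) in the definition of an algebraic family. In both directions the only real content is the observation that a \emph{finite} set that is ``covered piecewise'' by the members of an up-directed family is in fact contained in a single member; everything else is bookkeeping with the closure axioms.

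For the forward implication, assume $\phi$ is an algebraic closure operator and put $\mathcal{F}=\op{Cld}(X,\phi)$. Stability under arbitrary intersections is immediate from axioms (1)--(3): if $F_i=\phi(F_i)$ for $i\in I$, then $\bigcap_i F_i\subseteq\phi(\bigcap_i F_i)\subseteq\phi(F_i)=F_i$ for every $i$, whence $\phi(\bigcap_i F_i)=\bigcap_i F_i$. For (ii), let $\{F_i:i\in I\}$ be a non-empty up-directed subfamily of $\mathcal{F}$, set $Y=\bigcup_i F_i$, and show $\phi(Y)=Y$. One inclusion is axiom (1); for the other, take $z\in\phi(Y)$, and use axiom (4) to find a finite $B\subseteq Y$ with $z\in\phi(B)$. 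Each element of $B$ lies in some $F_i$, so finitely many applications of up-directedness produce a single index $i$ with $B\subseteq F_i$, and then $z\in\phi(B)\subseteq\phi(F_i)=F_i\subseteq Y$.

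For the converse, assume $\mathcal{F}$ is algebraic. Since $X\in\mathcal{F}$ (the empty intersection), $\phi(A):=\bigcap\{F\in\mathcal{F}:A\subseteq F\}$ is a well-defined map $\op{Pow}X\to\op{Pow}X$, and the standard argument using (i) shows that $\phi$ is a closure operator with $\op{Cld}(X,\phi)=\mathcal{F}$. It remains to check axiom (4). The inclusion $\bigcup\{\phi(B):B\subseteq A,\ |B|<\omega\}\subseteq\phi(A)$ is monotonicity. For the reverse inclusion, consider $\mathcal{D}=\{\phi(B):B\subseteq A,\ |B|<\omega\}$; it is non-empty (it contains $\phi(\emptyset)$) and up-directed, since for finite $B_1,B_2\subseteq A$ the set $B_1\cup B_2$ is a finite subset of $A$ with $\phi(B_1),\phi(B_2)\subseteq\phi(B_1\cup B_2)\in\mathcal{D}$. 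By (ii), $D:=\bigcup\mathcal{D}\in\mathcal{F}$, and $A\subseteq D$ because $a\in\phi(\{a\})\subseteq D$ for each $a\in A$; hence $D$ is a closed set containing $A$, so $\phi(A)\subseteq D=\bigcup\{\phi(B):B\subseteq A,\ |B|<\omega\}$.

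I expect no genuine obstacle here, as this is folklore; the one step that deserves a sentence rather than a wave of the hand is the passage from a finite subset of $\bigcup_i F_i$ to a single $F_i$ in the forward direction, together with its mirror image, the up-directedness of the family $\mathcal{D}$ of finite-support closures in the converse. If the excerpt's ``common knowledge'' reference to \cite{BS81} is felt to suffice, one could instead simply cite it and only spell out the verification of axiom (4), which is the part that actually uses finitariness.
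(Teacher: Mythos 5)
Your proof is correct, and it is essentially the standard argument the paper defers to: the paper gives no proof of Lemma \ref{alg}, citing \cite{BS81} as ``common knowledge,'' and your two directions (directed unions of closed sets stay closed via axiom (4), and finitarity of $\phi(A)=\bigcap\{F\in\mathcal{F}:A\subseteq F\}$ via up-directedness of the family of closures of finite subsets) are exactly the textbook verification found there. Both the finite-cover step in the forward direction and the up-directedness of $\mathcal{D}$ in the converse are handled correctly, so nothing is missing.
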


Note that lattices $\op{Cld} (X,\phi)$ formed through algebraic closure operators, are also called \emph{algebraic}, and they are characterized as complete lattices where each element is a join of compact elements: those would be exactly the closures of finite subsets. Check more details in \cite[Ch.1, sections 4,5]{BS81}. 

As in G. Gr\"atzer \cite[Lemma 28]{GLTF}, one would observe that Lemma \ref{alg} actually establishes Galois correspondence between algebraic closure operators on set $X$ and algebraic subsets of $\op{Pow} X$. Firstly, two mappings defined in Lemma \ref{alg} are the inverses of each other, and secondly, each of them reverses a natural order, defined below, on the set of all closure operators and on the set of all algebraic subsets.

\begin{df}\label{op order}
Given two algebraic closure operators $\Delta$ and $\phi$ on set $X$, we set $\Delta \leq \phi$ iff $\Delta(Y)\subseteq \phi(Y)$, for every $Y\subseteq X$. The partially ordered set of all algebraic closure operators is denoted $(\op{AClo} X,\leq)$.

Given two algebraic subsets $\mathcal{G}$ and $\mathcal{F}$, we define $\mathcal{G} \leq\mathcal{F}$ iff
$\mathcal{G} \subseteq\mathcal{F}$. The partially ordered set of all algebraic subsets of $\op{Pow} X$ is denoted
$(\op{S_p} (\op{Pow} X), \leq)$.
\end{df}

We note that $\Delta \leq \phi$, for closure operators $\Delta, \phi$ implies that every $\phi$-closed set is $\Delta$-closed, and that the lattice of closed sets of $\Delta$ will include the lattice of closed sets of $\phi$ as a lower subsemilattice.

\begin{thm}\label{AClo rep}
Both $(\op{AClo} X,\leq)$ and $(\op{S_p} (\op{Pow} X), \leq)$ are complete lattices, moreover, $(\op{AClo} X,\leq) \cong^\delta (\op{S_p} (\op{Pow} X), \leq)$ as complete lattices.
\end{thm}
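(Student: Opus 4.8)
The plan is to read the whole statement off the correspondence already observed right after Lemma~\ref{alg} and Definition~\ref{op order}, and in particular to avoid trying to describe joins or meets pointwise inside $\op{AClo} X$. First I would make that correspondence precise: by Lemma~\ref{alg} the map $\Phi\colon\phi\mapsto\op{Cld}(X,\phi)$ is a bijection from $\op{AClo} X$ onto $\op{S_p}(\op{Pow} X)$, its inverse sending an algebraic family $\mathcal{F}$ to the closure operator $\phi_{\mathcal{F}}$ with $\phi_{\mathcal{F}}(Y)=\bigcap\{F\in\mathcal{F}\colon Y\subseteq F\}$. Then I would verify that $\Phi$ is an order \emph{dual}-isomorphism, not merely an order-reversing bijection: as noted after Definition~\ref{op order}, $\Delta\leq\phi$ holds iff every $\phi$-closed set is $\Delta$-closed, i.e.\ iff $\op{Cld}(X,\phi)\subseteq\op{Cld}(X,\Delta)$; both implications are one-line consequences of extensivity, monotonicity and idempotence, so we obtain the equivalence $\Delta\leq\phi\iff\op{Cld}(X,\phi)\leq\op{Cld}(X,\Delta)$, which says precisely that $\Phi$ and $\Phi^{-1}$ both reverse order.

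Next I would prove that $(\op{S_p}(\op{Pow} X),\leq)$ is a complete lattice. The essential point — really the only computation in the proof — is that $\op{S_p}(\op{Pow} X)$ is closed under arbitrary intersections: for algebraic families $\mathcal{F}_i$ ($i\in I$), the family $\bigcap_i\mathcal{F}_i$ is again stable under arbitrary intersections and under unions of non-empty up-directed subfamilies, because each such closure condition on a family of sets is inherited by an intersection of families enjoying it. Since also $\bigcap\emptyset=\op{Pow} X$ is algebraic, every subfamily of $\op{S_p}(\op{Pow} X)$ has an infimum, namely its intersection, and a poset in which all infima exist is a complete lattice; thus $\bigwedge_i\mathcal{F}_i=\bigcap_i\mathcal{F}_i$ and $\bigvee_i\mathcal{F}_i=\bigcap\{\mathcal{G}\in\op{S_p}(\op{Pow} X)\colon\mathcal{F}_i\subseteq\mathcal{G}\text{ for all }i\in I\}$.

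Finally I would transport this across $\Phi$. A poset dually isomorphic to a complete lattice is itself a complete lattice, so $(\op{AClo} X,\leq)$ is complete; concretely $\bigvee_i\phi_i$ is the algebraic closure operator whose closed sets form $\bigcap_i\op{Cld}(X,\phi_i)$, and $\bigwedge_i\phi_i$ the one whose closed sets form $\bigvee_i\op{Cld}(X,\phi_i)$ computed in $\op{S_p}(\op{Pow} X)$. Since $\Phi$ is already an order dual-isomorphism and both sides are now known to be complete, $\Phi$ automatically carries arbitrary joins to meets and meets to joins, which is exactly the claim $(\op{AClo} X,\leq)\cong^\delta(\op{S_p}(\op{Pow} X),\leq)$ as complete lattices.

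I do not expect a serious obstacle, but the point worth isolating is why the argument must run through the closed-set side. The pointwise infimum $Y\mapsto\bigcap_i\phi_i(Y)$ of algebraic closure operators is always a closure operator, yet it need not satisfy condition~(4) of Definition~\ref{algCO}, so it is in general a \emph{proper} lower bound of $\bigwedge_i\phi_i$ in $\op{AClo} X$; there is thus no handy direct formula for meets of algebraic closure operators, and the completeness of $\op{AClo} X$ becomes available only because it is inherited, via $\Phi$, from the intersection-stable lattice $\op{S_p}(\op{Pow} X)$.
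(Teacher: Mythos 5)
Your argument is correct and follows essentially the route the paper intends: it reads the theorem off the Galois correspondence of Lemma~\ref{alg} (mutually inverse, order-reversing maps, as noted after Definition~\ref{op order}), proves completeness on the closed-set side by observing that $\op{S_p}(\op{Pow} X)$ is closed under arbitrary intersections (the paper instead invokes Theorem~\ref{Sp in Sub}(1), i.e.\ Gorbunov's result, for this), and transports completeness through the dual isomorphism. One correction to your closing remark, though it is not load-bearing: the pointwise infimum $\psi(Y)=\bigcap_i\phi_i(Y)$ has closed-set family $\op{Sg}_{\bigwedge}\bigl(\bigcup_i\Cld(X,\phi_i)\bigr)$, whereas the meet $\bigwedge_i\phi_i$ taken in $\op{AClo} X$ has the possibly larger family $\widetilde{\op{Sg}_{\bigwedge}\bigl(\bigcup_i\Cld(X,\phi_i)\bigr)}$; since a larger closed-set family means a smaller operator, one gets $\bigwedge_i\phi_i\leq\psi$, so when $\psi$ fails condition (4) it is a proper \emph{upper} bound of the meet in $\op{AClo} X$, not a proper lower bound as you wrote. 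Your underlying point — that there is no pointwise formula for meets of algebraic closure operators and the completeness must be imported from the intersection-stable lattice $\op{S_p}(\op{Pow} X)$ — remains valid.
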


We also observe the relation between $\op{S_p} (\op{Pow} X)$ and $\op{Sg}_{\bigwedge} (\op{Pow} X)$. The latter notation is for the lattice of complete meet-subsemilattices with $1$ of $\op{Pow} X$. Each element $\mathcal{F} \in \op{Sg}_{\bigwedge} (\op{Pow} X)$ represents the family of closed sets of some closure operator on $X$.
The following result is proved in V. Gorbunov \cite[Theorem 6.9]{G98}, see also a slightly stronger version in K. Adaricheva \cite[Theorem 3.2]{A11} that avoids direct reference to quasi-varieties. For any family $\mathcal{F}\subseteq \op{Pow} X$, we denote by $\op{Sg}_{\bigwedge}(\mathcal{F})$ the family generated by $\mathcal{F}$ and closed under arbitrary intersections, while $\widetilde{F}$ is the family closed under the unions of up-directed subfamilies in $\mathcal{F}$.

\begin{thm}\label{Sp in Sub}
Let $X$ be an arbitrary set.
\begin{itemize} 
\item[(1)] $\op{S_p} (\op{Pow} X)$ is a complete $\bigwedge$-subseimilattice and $\vee$-subsemilattice of $\op{Sg}_{\bigwedge} (\op{Pow} X)$.
\item[(2)] For any $\mathcal{F}\subseteq \op{Pow} X$, the minimal element $\mathcal{F}^* \in \op{S_p} (\op{Pow} X)$, containing $\mathcal{F}$, can be obtained as $\mathcal{F}^*=\widetilde{\op{Sg}_{\bigwedge}( \mathcal{F})}$. 
\end{itemize}
\end{thm}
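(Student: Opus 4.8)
The plan is to prove the three assertions in turn, throughout using Lemma~\ref{alg} to pass between algebraic subsets of $\op{Pow} X$ and finitary closure operators. The $\bigwedge$-subsemilattice part of (1) is the easy step: given $\{\mathcal{F}_i : i \in I\} \subseteq \op{S_p}(\op{Pow} X)$, I would first note that their meet in $\op{Sg}_{\bigwedge}(\op{Pow} X)$ is just $\bigcap_{i}\mathcal{F}_i$ (already closed under arbitrary intersections, and obviously the largest common complete meet-subsemilattice), and then observe that $\bigcap_i \mathcal{F}_i$ is an algebraic subset: closure under arbitrary intersections is inherited from each $\mathcal{F}_i$, and if $\{A_j\}$ is up-directed inside $\bigcap_i \mathcal{F}_i$ then $\bigcup_j A_j$ lies in every $\mathcal{F}_i$, hence in the intersection. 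This is routine.

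For the $\vee$-subsemilattice part of (1), I would first check that the join formula from the finite case persists: for arbitrary $X$ and $\mathcal{F}, \mathcal{G} \in \op{Sg}_{\bigwedge}(\op{Pow} X)$ the join is $\mathcal{H} := \{F \cap G : F\in\mathcal{F},\ G\in\mathcal{G}\}$, because $\mathcal{H}$ is already closed under arbitrary intersections (one has $\bigcap_k (F_k\cap G_k) = (\bigcap_k F_k)\cap(\bigcap_k G_k)$ with $\bigcap_k F_k\in\mathcal{F}$, $\bigcap_k G_k\in\mathcal{G}$, the empty intersection giving $X$), contains both $\mathcal{F}$ and $\mathcal{G}$, and sits inside every complete meet-subsemilattice containing them. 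Then, assuming $\mathcal{F},\mathcal{G}$ algebraic with finitary closure operators $\phi_{\mathcal{F}},\phi_{\mathcal{G}}$, I would take an up-directed subfamily $\{H_j = F_j\cap G_j : j\in J\}$ of $\mathcal{H}$ with union $H$, set $F=\phi_{\mathcal{F}}(H)$, $G=\phi_{\mathcal{G}}(H)$ so that $H\subseteq F\cap G\in\mathcal{H}$, and prove the reverse inclusion by a finite-witness trick: if $x\in F\cap G$, finitariness of $\phi_{\mathcal{F}}$ and $\phi_{\mathcal{G}}$ yields a single finite $B\subseteq H$ with $x\in\phi_{\mathcal{F}}(B)\cap\phi_{\mathcal{G}}(B)$; up-directedness of $\{H_j\}$ then gives $j_0$ with $B\subseteq H_{j_0}=F_{j_0}\cap G_{j_0}$, whence $x\in\phi_{\mathcal{F}}(F_{j_0})\cap\phi_{\mathcal{G}}(G_{j_0})=H_{j_0}\subseteq H$. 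Thus $H=F\cap G\in\mathcal{H}$, so $\mathcal{H}$ is algebraic.

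For (2): since intersections of algebraic subsets are algebraic (by the first part of (1)) and $\op{Pow} X$ is itself algebraic, the least algebraic subset $\mathcal{F}^*$ containing a given $\mathcal{F}$ exists. One inclusion is nearly formal: any algebraic $\mathcal{A}\supseteq\mathcal{F}$ is closed under arbitrary intersections, hence contains $\op{Sg}_{\bigwedge}(\mathcal{F})$, and then closed under up-directed unions, hence contains $\widetilde{\op{Sg}_{\bigwedge}(\mathcal{F})}$. For the reverse it suffices to show $\widetilde{\op{Sg}_{\bigwedge}(\mathcal{F})}$ is itself algebraic; closure under up-directed unions holds by construction, so everything comes down to the auxiliary claim that \emph{if $\mathcal{H}\subseteq\op{Pow} X$ is closed under arbitrary intersections then so is $\widetilde{\mathcal{H}}$}. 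To prove this I would take $A_i=\bigcup_{j\in J_i}H_{i,j}\in\widetilde{\mathcal{H}}$ ($i\in I$), each $\{H_{i,j}\}_j$ up-directed in $\mathcal{H}$, apply the distributive identity $\bigcap_{i\in I}\bigcup_{j\in J_i}H_{i,j}=\bigcup_{f\in\prod_i J_i}\bigcap_{i\in I}H_{i,f(i)}$, observe that each $\bigcap_i H_{i,f(i)}$ lies in $\mathcal{H}$, and verify that $\{\bigcap_i H_{i,f(i)} : f\in\prod_i J_i\}$ is up-directed by choosing coordinatewise upper bounds; hence $\bigcap_i A_i\in\widetilde{\mathcal{H}}$.

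I expect this auxiliary claim to be the main obstacle: the passage through the choice functions $f\in\prod_i J_i$ is the only genuinely non-formal step (and invokes the axiom of choice for infinite $I$), and one must also be careful that $\widetilde{\,\cdot\,}$ is the true closure under up-directed unions — either by checking that a single pass already yields a family closed under such unions, or by iterating and using a cardinality bound to stabilize — so that the claim can legitimately be applied to $\widetilde{\op{Sg}_{\bigwedge}(\mathcal{F})}$. Everything else, including the finite-witness argument in (1), is short once the objects are in place.
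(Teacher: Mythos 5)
Your part (1) is fine: meets in $\op{Sg}_{\bigwedge}(\op{Pow} X)$ are intersections and algebraicity passes to intersections, and your identification of the binary join with $\{F\cap G: F\in\mathcal{F},\,G\in\mathcal{G}\}$ together with the finite-witness argument (a finite $B\subseteq H$ with $x\in\phi_{\mathcal{F}}(B)\cap\phi_{\mathcal{G}}(B)$ lands in a single $H_{j_0}$ by up-directedness) correctly shows this join is again algebraic. Note that the paper itself does not prove this theorem but cites it from Gorbunov \cite[Theorem 6.9]{G98} and \cite[Theorem 3.2]{A11}, so your argument is in any case an independent route. Your auxiliary claim in (2) — that $\widetilde{\mathcal{H}}$ is closed under arbitrary intersections when $\mathcal{H}$ is, via the choice-function distributivity $\bigcap_i\bigcup_j H_{i,j}=\bigcup_f\bigcap_i H_{i,f(i)}$ and coordinatewise upper bounds — is also correct.

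The genuine gap is exactly the point you flag and leave unresolved: that $\widetilde{\op{Sg}_{\bigwedge}(\mathcal{F})}$ is closed under up-directed unions does \emph{not} hold ``by construction.'' An up-directed union of sets, each of which is the union of an up-directed subfamily of $\mathcal{H}=\op{Sg}_{\bigwedge}(\mathcal{F})$, is not visibly of that form, because the union of the witnessing subfamilies need not be up-directed; and your fallback of iterating $\widetilde{\,\cdot\,}$ until it stabilizes would not prove statement (2) as written, which asserts that a \emph{single} pass yields $\mathcal{F}^*$. The gap can be closed, and in a way that makes your distributivity computation unnecessary: let $\psi$ be the closure operator with $\Cld(X,\psi)=\mathcal{H}$ and check that $\widetilde{\mathcal{H}}$ coincides with the closed sets of the finitary operator $\rho(A)=\bigcup\{\psi(B): B\subseteq A,\ |B|<\omega\}$. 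Indeed, if $A\in\widetilde{\mathcal{H}}$, say $A=\bigcup\mathcal{D}$ with $\mathcal{D}\subseteq\mathcal{H}$ up-directed, then every finite $B\subseteq A$ lies in some $D\in\mathcal{D}$, so $\psi(B)\subseteq D\subseteq A$, i.e.\ $A$ is $\rho$-closed; conversely, if $A$ is $\rho$-closed, then $\{\psi(B): B\subseteq A \text{ finite}\}$ is an up-directed subfamily of $\mathcal{H}$ with union $A$. Since $\Cld(X,\rho)$ is an algebraic subset by Lemma \ref{alg}, both closure under arbitrary intersections and closure under up-directed unions of $\widetilde{\mathcal{H}}$ follow at once, and combined with your formal minimality argument this completes (2). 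Until such an argument is supplied, part (2) of your proof is incomplete.
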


The implication from this Theorem is that the join of arbitrary collection of algebraic subsets is generally larger than closing it by arbitrary intersections: it requires to add the unions of up-directed subfamilies. Therefore, $\op{S_p} (\op{Pow} X)$ does not form a complete $\bigvee$-subsemilattice in $\op{Sg}_{\bigwedge} (\op{Pow} X)$.

\section{ Algebraic convex geometries}

It is well-known that a convex geometry on a finite set $X$ is always a \emph{standard} closure system, which means that $\phi(\{x\})\setminus \{x\}$ is closed, for every $x \in X$. This observation can be generalized to algebraic convex geometries.

\begin{prop}\label{alg geom stan}
Let\/ $(X, \phi)$ be an algebraic convex geometry.  Then
$\phi(\{x\}) \setminus \{x\}$ is closed for every $x \in X$. 
\end{prop}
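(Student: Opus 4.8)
The plan is to show that the set $A := \phi(\{x\}) \setminus \{x\}$ is closed by a two-stage argument: first observe that the only element which could conceivably lie in $\phi(A) \setminus A$ is $x$ itself, and then rule out $x \in \phi(A)$ via a minimal finite witness together with the anti-exchange property (AEP).

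First I would note that $A \subseteq \phi(\{x\})$, so monotonicity and idempotency of $\phi$ give $\phi(A) \subseteq \phi(\phi(\{x\})) = \phi(\{x\}) = A \cup \{x\}$. Since $A \subseteq \phi(A)$ holds automatically, it therefore suffices to prove $x \notin \phi(A)$, and then $\phi(A) = A$.

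Suppose toward a contradiction that $x \in \phi(A)$. This is where the finitary property (4) enters essentially — in the finite case this step is vacuous. By (4) there is a finite $B \subseteq A$ with $x \in \phi(B)$, and we may take $B$ of least cardinality with this property. Since $\phi(\emptyset) = \emptyset$, $B$ is non-empty; fix $y \in B$ and set $B' := B \setminus \{y\}$, so that $x \notin \phi(B')$ by minimality. Put $C := \phi(B')$; this is a closed set, $x \notin C$, and $x \neq y$ because $y \in A = \phi(\{x\}) \setminus \{x\}$. From $B' \cup \{y\} \subseteq C \cup \{y\}$ and $x \in \phi(B) = \phi(B' \cup \{y\})$ we obtain $x \in \phi(C \cup \{y\})$. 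Applying (AEP) to the closed set $C$ with the pair $x \neq y$ then yields $y \notin \phi(C \cup \{x\})$.

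The contradiction is now immediate: $y \in A \subseteq \phi(\{x\}) \subseteq \phi(C \cup \{x\})$, which contradicts $y \notin \phi(C \cup \{x\})$. Hence $x \notin \phi(A)$, so $A$ is closed. I do not expect a serious obstacle here; the one point requiring care — and the only place where the hypotheses go beyond the classical finite situation — is the passage to a finite subset $B \subseteq A$ of minimal size witnessing $x \in \phi(B)$, which is precisely where finitarity of $\phi$ substitutes for the trivial finiteness argument available when $X$ is finite.
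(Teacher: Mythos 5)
Your proof is correct and follows essentially the same route as the paper: assume $x\in\phi(A)$, pass to a minimal finite witness via algebraicity, remove one element, and apply (AEP) to the closure of the remainder to reach a contradiction (the paper phrases minimality as $\phi(P')=\phi(\{x\})$, which is equivalent to your $x\in\phi(B)$). Your preliminary reduction $\phi(A)\subseteq A\cup\{x\}$ is just an explicit version of the paper's implicit observation that $\phi(P)=\phi(\{x\})$, so there is no substantive difference.
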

\begin{proof}
Suppose $\phi(\{x\})\setminus \{x\} = P \not = \phi(P)$, for some $x \in X$. Since $\phi(P)=\phi(\{x\})$ and $\phi$ is an algebraic operator, there exists a finite subset $P'\subseteq P$ such that $\phi(P')=\phi(\{x\})$. We may assume that $P'$ is minimal with this property. Note that $P'\not = \emptyset$ due to definition of convex geometry. Then for every $p\in P'$ we have $\phi(P'\setminus \{p\}) \subset \phi(\{x\})$ and $p \not \in \phi(P'\setminus \{p\})$. Denoting $A=\phi(P'\setminus \{p\})$, we have $x,p \not \in A$ and $x \in \phi(A\cup \{p\})$, $p \in  \phi(A\cup \{x\})$, which contradict (AEP).
\end{proof}

The next statement immediately follows from properties of standard closure operator.

\begin{cor}
In every algebraic convex geometry, for every $x \in X$, $\phi(\{x\})$ is a completely join irreducible element of $\Cld(X,\phi)$.
\end{cor}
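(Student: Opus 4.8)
The plan is to show that $P:=\phi(\{x\})\setminus\{x\}$ is the largest closed set properly contained in $\phi(\{x\})$; complete join irreducibility of $\phi(\{x\})$ then follows at once. Recall that in the complete lattice $\Cld(X,\phi)$ the meet of a family of closed sets is their intersection, while the join of a family $\{C_i:i\in I\}$ of closed sets is $\phi\bigl(\bigcup_{i\in I}C_i\bigr)$.

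First, by Proposition~\ref{alg geom stan} the set $P=\phi(\{x\})\setminus\{x\}$ is closed, and $P\subsetneq\phi(\{x\})$ because $x\in\phi(\{x\})\setminus P$. Next I would check that every closed $Q$ with $Q\subsetneq\phi(\{x\})$ satisfies $Q\subseteq P$: indeed, if $x\in Q$ then $\phi(\{x\})\subseteq\phi(Q)=Q$ by monotonicity and closedness, forcing $Q=\phi(\{x\})$, a contradiction; hence $x\notin Q$ and $Q\subseteq\phi(\{x\})\setminus\{x\}=P$.

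Finally, suppose $\phi(\{x\})=\bigvee_{i\in I}C_i$ in $\Cld(X,\phi)$ with each $C_i$ closed and $C_i\neq\phi(\{x\})$ for all $i$. Then each $C_i\subsetneq\phi(\{x\})$, so $C_i\subseteq P$ by the previous step, whence $\bigcup_{i\in I}C_i\subseteq P$ and therefore $\phi(\{x\})=\phi\bigl(\bigcup_{i\in I}C_i\bigr)\subseteq\phi(P)=P$, contradicting $P\subsetneq\phi(\{x\})$. Hence $\phi(\{x\})=C_i$ for some $i\in I$, i.e.\ $\phi(\{x\})$ is completely join irreducible. (Note $\phi(\{x\})$ is not the bottom element $\phi(\emptyset)=\emptyset$, since $x\in\phi(\{x\})$, so the degenerate case $I=\emptyset$ does not arise.) I do not expect a genuine obstacle here — consistently with the paper's remark that the statement follows from standardness, the only point requiring care is to use the correct description of joins in $\Cld(X,\phi)$, namely $\phi$ applied to the union, rather than the set-theoretic union itself.
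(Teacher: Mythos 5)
Your proposal is correct and follows essentially the same route the paper intends: the paper derives the corollary from standardness (Proposition~\ref{alg geom stan}), and your argument is exactly that standard argument spelled out, showing $\phi(\{x\})\setminus\{x\}$ is the largest closed proper subset of $\phi(\{x\})$ and using the description of joins in $\Cld(X,\phi)$ as $\phi$ of the union. No gaps.
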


The statement of Proposition \ref{alg geom stan} is no longer true in non-algebraic convex geometries.
\begin{exm}
\end{exm}
Let $X=N\cup\{x\}$, for some countable set $N$. Then define closure operator $\phi$ as follows:\\
\[
\phi(Y)=
\begin{cases}
X &\text{if } Y \text{ is co-finite or contains } x;\\
Y &\text{otherwise}.
\end{cases}
\] 
It is easy to verify that $\phi$ satisfies (AEP), thus, it is a convex geometry. We also observe that $\phi(\{x\}) = X$, and $N=X\setminus \{x\}$ is not closed.
\vspace{0.5cm}

We note that every standard closure system is \emph{reduced} and \emph{zero-closure}. The first property means that the closures of different singletons must be different,

check, for example, \cite[Section 2]{ANR13}. Zero-closure property is adopted in the definition of arbitrary convex geometry, but the closure operator of convex geometry might not be reduced in non-algebraic case.

In K.~Adaricheva and M.~Pouzet \cite{AP11}, there were some further observations on even wider class of convex geometries, those with \emph{weakly atomic} lattice of closed sets.
The latter property means that every interval $[a,b]$ has a pair of elements $c,d$ that form a cover: $c\prec d$. We call such elements \emph{a covering pair}. It is well-known that every algebraic lattice is weakly atomic, see, for example, \cite{G03}. Thus, the following statement is a form of generalization from the algebraic to weakly atomic case. We recall that a (complete) lattice is called \emph{spatial}, if every element is an (infinite) join of completely join irreducible elements.

\begin{lm}\label{weakat}
Suppose convex geometry $G=(X, \phi)$ satisfies the property that every interval $[A,B]\subseteq L=\op{Cld}(X,\phi)$ of closed sets has a covering pair:
$A\subseteq A'\prec B'\subseteq B$. Then $L$ is spatial. 
\end{lm}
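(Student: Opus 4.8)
The goal is to show that every closed set $B \in L$ is the join (union, after closure) of completely join-irreducible elements lying below it. The natural strategy is a separation argument: given $B$ and a closed set $C \subsetneq B$, I want to produce a completely join-irreducible element $J \leq B$ with $J \not\leq C$; summing such $J$'s over all $C \prec B$ (or over all $C \subsetneq B$) will then force the join of all join-irreducibles below $B$ to equal $B$.

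First I would fix $C \subsetneq B$ in $L$ and apply the covering hypothesis to the interval $[C, B]$: there is a covering pair $C \subseteq A' \prec B' \subseteq B$. So it suffices to handle the case of a single cover $A' \prec B'$: find a completely join-irreducible $J \leq B'$ with $J \not\leq A'$. Since $B'$ covers $A'$, pick $x \in B' \setminus A'$; then $\phi(A' \cup \{x\}) = B'$ by the covering property. Now I would use Proposition~\ref{alg geom stan}'s circle of ideas in the relevant direction — but more directly, the anti-exchange property: I want to locate the ``right'' join-irreducible. Consider the set of all closed $D$ with $x \in D$ and... actually the cleaner route is to use Zorn's lemma on the family $\mathcal{D} = \{ D \in L : D \subseteq B', \ x \notin D \}$. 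This family is nonempty ($A' \in \mathcal{D}$) and closed under unions of chains (here one does \emph{not} need algebraicity, only that a union of a chain of closed sets omitting $x$ still omits $x$, and its closure — wait, its closure might grab $x$). This is the subtle point: $\mathcal{D}$ need not be closed under arbitrary unions in $L$ because closing up a union could pull in $x$. So instead I would take $\mathcal{D}$ inside the interval $[A', B']$ and use that $B'$ covers $A'$: the only closed sets in $[A', B']$ are $A'$ and $B'$, so $\mathcal{D} \cap [A',B'] = \{A'\}$ trivially — that collapses. Hence the covering pair must be used more cleverly.

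The correct approach: don't restrict to $[A',B']$; instead work in all of $[\phi(\emptyset), B']$ and let $\mathcal{D} = \{ D \in L : x \notin D\}$, which contains $\emptyset$. A maximal element $D^*$ of $\mathcal{D}$ below $B'$ (obtained via Zorn, using that a union of a chain in $\mathcal{D}$ omits $x$, but one must check its \emph{closure} still omits $x$ — and here is where (AEP) enters, or rather where I invoke the covering hypothesis again on $[\bigcup \text{chain}, B']$) will satisfy: $\phi(D^* \cup \{x\})$ covers $D^*$, because for any closed $E$ with $D^* \subsetneq E \subseteq \phi(D^* \cup \{x\})$ we have $x \in E$ by maximality, so $E = \phi(D^* \cup \{x\})$. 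Thus $J := \phi(D^* \cup \{x\})$ is join-irreducible with unique lower cover $D^*$; and it is \emph{completely} join-irreducible precisely because $D^*$ is the unique maximal closed set below $J$ not containing the ``new'' point — one shows any family of closed sets properly below $J$ all lie in $\mathcal{D}$ restricted below $J$, hence all lie below $D^*$, so their join is below $D^* \subsetneq J$. Finally $J \leq B' \leq B$ and $J \not\leq A'$ since $x \in J$, $x \notin A'$. Letting $B$ vary and taking the join of all completely join-irreducibles below $B$: if that join were some $B_0 \subsetneq B$, the argument applied to the cover between $B_0$ and $B$ produces a completely join-irreducible $\leq B$ not $\leq B_0$, a contradiction. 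Hence $B$ is spatial, and since $B$ was arbitrary, $L$ is spatial.

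\textbf{Main obstacle.} The delicate step is the Zorn's-lemma closure-under-chains verification: a directed union of closed sets each omitting $x$ has union still omitting $x$, but the \emph{lattice join} in $L$ is the closure of that union, which a priori could contain $x$. Resolving this requires using the covering hypothesis (or algebraicity, but the lemma is stated only with the covering hypothesis) to argue that at the union we can still move up by one cover without hitting $x$, i.e.\ that the union itself is closed or that $x \notin \phi(\bigcup \text{chain})$. I expect this to be the technical heart of the proof, and it is where (AEP) — via Proposition~\ref{alg geom stan}-type reasoning about $\phi(\{x\})\setminus\{x\}$ — together with the covering property does the real work. The rest is standard order-theoretic bookkeeping.
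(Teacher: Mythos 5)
Your overall skeleton is the right one (and matches the intended argument): set $B_0=\bigvee\{J\le B: J \text{ completely join-irreducible}\}$, use the hypothesis to get a covering pair $B_0\subseteq A'\prec B'\subseteq B$ when $B_0\subsetneq B$, and derive a contradiction by exhibiting a completely join-irreducible element inside $B'$ not contained in $A'$. But the two steps you flag as delicate are exactly where the proposal breaks, and the missing ingredient is the one fact the paper itself singles out as ``part of the proof'' of this lemma (and proves from (AEP) alone, with no algebraicity, inside Proposition~\ref{covCG}): a covering pair in $\Cld(X,\phi)$ is a one-point extension, $B'=A'\cup\{x\}$. Once you have this, your Zorn's-lemma difficulty evaporates: every closed $D\subseteq B'$ with $x\notin D$ satisfies $D\subseteq B'\setminus\{x\}=A'$, so $A'$ is already the largest such set and no maximality argument (hence no chain-closure verification, which indeed cannot be carried out without algebraicity) is needed. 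You actually computed this collapse yourself (``$\mathcal D\cap[A',B']=\{A'\}$'') and discarded it as useless, when it is precisely the useful statement.

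The second, independent flaw is the join-irreducibility verification. Even granting a maximal $D^*$ avoiding $x$, the element $J=\phi(D^*\cup\{x\})$ is not completely join-irreducible in general: a proper closed subset of $J$ may well contain $x$ (for instance $\phi(\{x\})$ itself whenever $\phi(\{x\})\subsetneq J$), so such subsets need not lie in your family $\mathcal D$ nor below $D^*$; in fact $J=D^*\vee\phi(\{x\})$ is join-reducible whenever both joinands are proper. (With the one-point fact your $D^*$ is $A'$ and $J=B'$, which is typically join-reducible.) The correct candidate is the \emph{minimal} closed set containing $x$, namely $\phi(\{x\})$: every proper closed subset of $\phi(\{x\})$ omits $x$, hence is contained in $\phi(\{x\})\setminus\{x\}\subseteq B'\setminus\{x\}=A'$, so the join of all proper closed subsets is contained in $A'$ and misses $x$; thus $\phi(\{x\})$ is completely join-irreducible, satisfies $\phi(\{x\})\subseteq B'\subseteq B$, and is not contained in $A'\supseteq B_0$ --- the desired contradiction. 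Note that this corrected argument uses only (AEP) and the covering hypothesis, never algebraicity, which is essential since the lemma is stated for arbitrary (possibly non-algebraic) convex geometries.
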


The part of the proof of Lemma \ref{weakat} was to show that if two closed sets of any convex geometry form a covering pair $X_1\prec X_2$, then $|X_2\setminus X_1|=1$. The next statement shows such a property of closed sets in algebraic closure systems holds only in convex geometries. This generalizes the result of S.P.~Avann \cite[Th. 5.2]{Av61} for finite convex geometries.

\begin{prop}\label{covCG}
For algebraic closure system $G=(X,\phi)$, the following are equivalent:
\begin{itemize}
\item[(1)] $G$ is a convex geometry;
\item[(2)] If $X_1\prec X_2$ in $\Cld (X, \phi)$, then $|X_2\setminus X_1|=1$.
\end{itemize}
\end{prop}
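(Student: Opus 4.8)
The plan is to prove both implications by working with finite witnesses, exploiting that $\phi$ is algebraic throughout. For the direction $(1)\Rightarrow(2)$, suppose $G$ is a convex geometry and $X_1\prec X_2$ is a covering pair in $\Cld(X,\phi)$. Pick any $y\in X_2\setminus X_1$; then $X_1\subsetneq\phi(X_1\cup\{y\})\subseteq X_2$, and since $X_1\prec X_2$ we get $\phi(X_1\cup\{y\})=X_2$. Now suppose for contradiction there is a second element $z\in X_2\setminus X_1$ with $z\neq y$. Then $z\in X_2=\phi(X_1\cup\{y\})$ and $z\notin X_1$, so (AEP) applied to the closed set $X_1$ gives $y\notin\phi(X_1\cup\{z\})$. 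But $\phi(X_1\cup\{z\})$ is a closed set strictly between $X_1$ and $X_2$ (strict at the top because it omits $y$), contradicting $X_1\prec X_2$. Hence $X_2\setminus X_1=\{y\}$. This direction needs no finiteness and is essentially the argument already referenced in the discussion preceding Lemma~\ref{weakat}.

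For $(2)\Rightarrow(1)$, the strategy is contrapositive: assume $G$ is \emph{not} a convex geometry, so (AEP) fails, and produce a covering pair $X_1\prec X_2$ with $|X_2\setminus X_1|\geq 2$. Failure of (AEP) gives a closed set $A$ and distinct $x,y\notin A$ with $x\in\phi(A\cup\{y\})$ and $y\in\phi(A\cup\{x\})$. Set $B=\phi(A\cup\{x,y\})$; then $B=\phi(A\cup\{x\})=\phi(A\cup\{y\})$, so $B$ is a closed set containing both $x$ and $y$ while $A$ contains neither, and $A\subsetneq B$. The key step is now to find, inside the interval $[A,B]$, a covering pair whose top set contains two "new" points. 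Here I would use that an algebraic lattice is weakly atomic: consider the interval $[A,B]$ and find a cover $A'\prec B'$ inside it. The subtlety is that a randomly chosen cover might have $|B'\setminus A'|=1$ with the single new point being neither $x$ nor $y$; so instead I would argue more carefully, for instance by choosing $A'$ maximal among closed sets in $[A,B)$ that contain neither $x$ nor $y$ (such a maximal element exists by Zorn's lemma together with property (ii) of algebraic families, since the union of an up-directed chain of such sets again contains neither $x$ nor $y$ and stays below $B$ — one checks it cannot equal $B$). Then let $B'$ be any closed set with $A'\prec B'\le B$, which exists by weak atomicity of $[A',B]$. By maximality of $A'$, the set $B'$ must contain $x$ or $y$; but in fact I claim $B'$ must contain \emph{both}: if $x\in B'$ and $y\notin B'$, then since $x\in\phi(A\cup\{y\})\subseteq\phi(A'\cup\{y\})$ we would get $B'\subseteq\phi(A'\cup\{y\})$, yet $\phi(A'\cup\{y\})$ lies in $[A,B)$ (it omits $x$? — no), so this sub-argument needs the symmetric relation and a small case analysis to force $\{x,y\}\subseteq B'\setminus A'$, giving $|B'\setminus A'|\geq 2$.

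The main obstacle is exactly this last step: arranging a covering pair whose new points include the two AEP-violating elements. The difficulty is that weak atomicity only hands us \emph{some} cover in an interval, with no control over which points appear; the fix is to first climb to a maximal closed set $A'$ avoiding $\{x,y\}$ and only then invoke weak atomicity on $[A',B]$, and to verify — using $x\in\phi(A\cup\{y\})$, $y\in\phi(A\cup\{x\})$ and monotonicity — that the cover $B'$ above $A'$ cannot introduce just one of $x,y$ without introducing the other. I expect the verification of existence of $A'$ (Zorn plus clause (ii) of algebraicity) and the final "both or neither" dichotomy to be the only places where real care is required; everything else is bookkeeping with the closure axioms.
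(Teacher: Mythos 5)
Your direction (1)$\Rightarrow$(2) is correct and is essentially the paper's own argument (and, as you note, it uses no algebraicity). Your direction (2)$\Rightarrow$(1) also follows the same route as the paper, only dressed differently: the paper fixes a maximal chain in $[A,\phi(A\cup\{y\})]\setminus\{\phi(A\cup\{y\})\}$ and uses algebraicity to show that its union is a lower cover of the top omitting $y$ (and, under the assumed symmetric failure, omitting $x$ as well), whereas you produce a Zorn-maximal element $A'$ of $[A,B)$; the two devices are interchangeable, both resting on closure under up-directed unions.

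The step at which your write-up stalls --- the ``both or neither'' dichotomy --- is not a real obstacle, and neither weak atomicity nor any case analysis is needed. Since the failure of (AEP) is symmetric, $x\in\phi(A\cup\{y\})$ and $y\in\phi(A\cup\{x\})$ force $\phi(A\cup\{x\})=\phi(A\cup\{y\})=B$. Consequently \emph{every} closed $C$ with $A\subseteq C\subsetneq B$ omits both $x$ and $y$: if, say, $x\in C$, then $B=\phi(A\cup\{x\})\subseteq C$, contradicting $C\subsetneq B$. So your side condition ``containing neither $x$ nor $y$'' in the Zorn step is automatic, and, more importantly, maximality of $A'$ in $[A,B)$ already yields $A'\prec B$ in $\Cld(X,\phi)$, because any closed set strictly between $A'$ and $B$ would lie in $[A,B)$ and properly contain $A'$. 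Thus the covering pair is $A'\prec B$ itself, its top contains both $x$ and $y$, and $|B\setminus A'|\geq 2$ contradicts (2). Two remarks on your formulation: your appeal to weak atomicity is misquoted (it supplies a covering pair somewhere in $[A',B]$, not a cover of $A'$) and is in any case superfluous here; and your tentative sub-argument via $\phi(A'\cup\{y\})$ indeed fails as you suspected, since $\phi(A'\cup\{y\})=B\ni x$ --- the two-line observation above is the correct replacement, and it is in substance exactly what the paper does when it notes that the union of its maximal chain omits both $x$ and $y$.
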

\begin{proof} That (1) implies (2) follows from the proof of Lemma \ref{weakat}, and we include it here for completeness. Note that assumption about algebraicity of the operator is not needed here.

Indeed, let $c=X_1=\phi(X_1)\prec d=X_2=\phi(X_2)$ be a covering pair in  $\Cld (X, \phi)$. Pick any $x \in X_2\setminus X_1$. Then $X_2=\phi(X_1 \cup \{x\})$. If there is another $y \in X_2\setminus X_1$, $y\not = x$, then $y \in \phi(X_1\cup x)$ implies $x \not \in \phi(X_1\cup y)$. Hence $X_1 < \phi(X_1\cup \{y\}) < \phi(X_1\cup \{x\})=X_2$, a contradiction to $X_1\prec X_2$.

Now assume (2) and consider $x\not = y \in X\setminus A$, for some $A=\phi(A)$, such that $x\in \phi(A\cup \{y\})=A_y$. Take a maximal chain in $\mathcal{C} \subseteq [A,A_y]\setminus\{A_y\}$, and consider $\bigvee \mathcal{C}=C_0 \in \Cld(X,\phi)$. By the algebraicity of the operator, we get $C_0=\bigvee \mathcal{C} = \bigcup \mathcal{C}$. If $C_0 =A_y$, then we get a contradiction between $\phi(\{y\}) \leq \bigcup \mathcal{C}$ and $y \not \in C$, for every $C\in \mathcal{C}$. Hence, $C_0\prec A_y$, and, by assumption, $|A_y\setminus C_0|=1$. This implies $A_y=C_0\cup \{y\}$. Moreover, every maximal chain in $[A,A_y]$ contains $C_0$, therefore, $C_0$ is a unique lower cover of $A_y$ in interval $[A,A_y]$.

If we assume that $y \in \phi(A\cup \{x\})=A_x$, then $A_y=A_x$, so the same argument as above leads to $A_y=C_0\cup\{x\}$, a contradiction. Hence,  $A_x \subseteq C_0$ and $y \not \in A_x$. This implies (AEP).
\end{proof}

\section{Generalization of Edelman-Jamison Theorem for convex geometries on a fixed set}\label{Gen}

In this section we want to consider all possible \emph{algebraic} convex geometries defined on a given set $X$. 
The focus of this section is the generalization of representation of convex geometry via compatible orderings given in Theorem \ref{EJ} of  P.~H.~Edelman and R~E.~Jamison. There were further efforts, for the case of algebraic convex geometries, see N. Wahl \cite{W01}. We will fine-tune latter results and provide some new observations.

If $\mathcal{G}$ is the family of closed sets of an algebraic convex geometry, then $\mathcal{G}$ is an element of 
$\op{S_p} (\op{Pow} X)$,  
as shown in Lemma \ref{alg}. We recall that $\op{S_p} (\op{Pow} X)$ itself is contained in $\op{Sg}_{\bigwedge} (\op{Pow} X)$, the latter representing all complete meet subsemiattices in $\op{Pow} X$, or the families of closed sets of closure operators on $X$. 

We will denote $\op{ACG} X\subseteq \op{S_p} (\op{Pow} X)$ the collection of all algebraic convex geometries defined on $X$, ordered by the containment order on their families of closed sets, see Definition \ref{op order}. 

Our first effort is to show that  $\op{ACG} X$ is a complete $\bigvee$-subsemilattice in $\op{S_p} (\op{Pow} X)$. For this, we will use Theorem \ref{Sp in Sub} that tells that obtaining the smallest family $\mathcal{F}^* \in \op{S_p} (\op{Pow} X)$, containing any given family $\mathcal{F}\subseteq \op{Pow} X$ can be done in two steps:
\begin{itemize}
\item build family $\op{Sg}_{\bigwedge}(\mathcal{F})$ adding arbitrary intersections of subfamilies in $\mathcal{F}$;
\item build family $\widetilde{\op{Sg}_{\bigwedge}(\mathcal{F})}$ adding the unions of non-empty up-directed subfamilies in 
$\op{Sg}_{\bigwedge}(\mathcal{F})$.
\end{itemize}

The following result shows that if we start from families $\mathcal{G}_i$, $i\in I$, of closed sets of some convex geometries on $X$, and $\mathcal{F}=\bigcup_{i\in I} \mathcal{G}_i$, then both $\op{Sg}_{\bigwedge}(\mathcal{F})$ and $\widetilde{\op{Sg}_{\bigwedge}(\mathcal{F})}$ will represent families of closed sets of another convex geometry on $X$. 

\begin{thm}\label{JoinSp} Let $X$ be an arbitrary set.
If $\mathcal{G}_i=\Cld(X,\phi_i)$, $i\in I$,
are families of closed sets of some algebraic convex geometries on $X$, then the smallest element $\mathcal{F}^*\in \op{S_p}(\op{Pow} X)$, containing $\mathcal{F}=\bigcup_{i\in I} \mathcal{G}_i$ is also a convex geometry.

\end{thm}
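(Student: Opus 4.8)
The plan is to verify that $\mathcal F^*$ satisfies properties (1), (2), (3) characterizing convex geometries as set systems, since by Lemma \ref{alg} we already know $\mathcal F^* \in \op{S_p}(\op{Pow} X)$ is the family of closed sets of some algebraic closure operator $\psi$. Properties (1) and (2) are automatic: $\emptyset \in \mathcal F$ because $\emptyset \in \mathcal G_i$ for each $i$ (each $\phi_i$ is zero-closure), $X \in \mathcal F^*$ by the remark after the definition of algebraic family, and closure under intersections is built into $\op{S_p}(\op{Pow} X)$. So the entire content is property (3): for every $Y \in \mathcal F^*$ with $Y \neq X$, there is some $x \in X \setminus Y$ with $Y \cup \{x\} \in \mathcal F^*$. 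Equivalently, I must show that whenever $Y \prec Y'$ is a covering pair in $\Cld(X,\psi)$, then $|Y' \setminus Y| = 1$; this is precisely condition (2) of Proposition \ref{covCG}, and since $\psi$ is algebraic, Proposition \ref{covCG} will then hand me back that $\mathcal F^*$ is a convex geometry. (One must first argue $\mathcal F^* \neq \{X\}$, i.e.\ that covering pairs exist below $X$, which follows because $\Cld(X,\psi)$ is algebraic hence weakly atomic.)

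Following the two-step construction from Theorem \ref{Sp in Sub}, I would handle $\op{Sg}_{\bigwedge}(\mathcal F)$ first and then $\widetilde{\op{Sg}_{\bigwedge}(\mathcal F)}$. For the first step: a typical element of $\op{Sg}_{\bigwedge}(\mathcal F)$ has the form $Y = \bigcap_{j \in J} F_j$ with each $F_j \in \mathcal G_{i(j)}$. Given such a $Y \neq X$, I want to produce $x \notin Y$ with $Y \cup \{x\} \in \op{Sg}_{\bigwedge}(\mathcal F)$. The idea is to pick an index $j_0$ with $F_{j_0} \neq X$, use property (3) inside the convex geometry $\mathcal G_{i(j_0)}$ to get $x \in X \setminus F_{j_0}$ with $F_{j_0} \cup \{x\} \in \mathcal G_{i(j_0)}$, and then check that replacing $F_{j_0}$ by $F_{j_0} \cup \{x\}$ in the intersection — more carefully, intersecting $Y$ with $F_{j_0} \cup \{x\}$, noting $Y \subseteq F_{j_0}$ — yields $Y$ or $Y \cup \{x\}$; the issue is ensuring $x \notin Y$ can be arranged, which requires choosing $F_{j_0}$ to be a \emph{maximal} proper member of $\{F_j\}$ under inclusion (or passing to a covering pair $Y \prec Z$ directly and locating a single witnessing coordinate). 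The cleanest route is probably to argue directly with covering pairs in $\Cld(X,\psi)$ rather than with the explicit form of elements.

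For the second step, passing from $\op{Sg}_{\bigwedge}(\mathcal F)$ to $\widetilde{\op{Sg}_{\bigwedge}(\mathcal F)}$ by adjoining unions of up-directed subfamilies: I would invoke the argument in the proof of Proposition \ref{covCG} almost verbatim. If $Y \prec Y'$ in $\Cld(X,\psi)$, take a maximal chain $\mathcal C \subseteq [Y, Y'] \setminus \{Y'\}$; by algebraicity $C_0 := \bigvee \mathcal C = \bigcup \mathcal C$ is closed, and $C_0 \prec Y'$ (if $C_0 = Y'$ one derives a contradiction as in the cited proof). Since each closed set in the first-step family already satisfies the single-element cover property (established in step one, at least for covers that arise there), and unions of up-directed families do not create new "wide" covers — this is exactly the mechanism behind weak atomicity being inherited — one concludes $|Y' \setminus Y| = 1$.

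The main obstacle I anticipate is the \textbf{first step}: showing that adjoining arbitrary intersections to $\mathcal F = \bigcup_i \mathcal G_i$ cannot destroy the convex-geometry property, i.e.\ cannot create a covering pair $Y \prec Z$ with $|Z \setminus Y| \geq 2$. The delicate point is that an intersection of closed sets from \emph{different} $\mathcal G_i$'s need not be closed in any single $\mathcal G_i$, so property (3) of the individual geometries does not apply to $Y$ directly; one must instead track how a cover in the intersection lattice projects into the coordinate lattices, show the cover is witnessed in a single coordinate $\mathcal G_{i(j_0)}$, and transfer the single-element property back. This is the analogue, in the infinite algebraic setting, of Edelman's Proposition that the join of two finite convex geometries is a convex geometry, and I expect it to require the (AEP) formulation rather than the set-system formulation at the critical moment — which is why routing through Proposition \ref{covCG} (whose proof of $(1)\Rightarrow(2)$ needs no algebraicity) is the strategically right framing.
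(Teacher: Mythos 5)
There is a genuine gap: your proposal is a strategy outline whose two central steps are never actually carried out, and they are precisely where the work lies. For the first step, you need to know that closing $\mathcal F=\bigcup_i\mathcal G_i$ under arbitrary intersections cannot produce a ``wide'' cover (equivalently, that the operator $\psi$ with $\psi(A)=\bigcap_i\phi_i(A)$ still satisfies (AEP)). You correctly flag this as the main obstacle --- an intersection of sets from different $\mathcal G_i$ lies in no single coordinate geometry --- but you only say one must ``show the cover is witnessed in a single coordinate,'' without an argument. The paper's proof supplies exactly the missing idea: given $A=\psi(A)$, $x,y\notin A$, $x\in\psi(A\cup\{y\})$, one shows there is a \emph{single} index $i$ with $x,y\notin\phi_i(A)$, by splitting $I=I_1\cup I_2$ according to whether $x\in\phi_i(A)$ or $y\in\phi_i(A)$ and deriving $x\in\psi(A)$ from $x\in\psi(A\cup\{y\})\subseteq\phi(\phi(A)\cup\{y\})=\phi(A)$, a contradiction; then (AEP) for that $\phi_i$ transfers to $\psi$. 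Nothing in your sketch replaces this index-splitting argument. For the second step, your assertion that ``unions of up-directed families do not create new wide covers'' is exactly what has to be proved, not a known mechanism; the paper instead verifies (AEP) for the algebraic operator $\rho$ of $\mathcal F^*=\widetilde{\op{Sg}_{\bigwedge}(\mathcal F)}$ directly, writing a non-$\psi$-closed $A$ as an up-directed union $\bigcup_iA_i$ of $\psi$-closed sets, showing $\rho(A\cup\{y\})\subseteq\bigcup_i\psi(A_i\cup\{y\})$, locating the witness $x\in\psi(A_k\cup\{y\})$ at a suitable $A_k$, and applying (AEP) for $\psi$ there. Also note that your maximal-chain manoeuvre is vacuous when $Y\prec Y'$ is already a cover (the chain's union is just $Y$), so it does not advance the argument.

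A secondary but real flaw is the opening framing: the set-system axiom (3) (``every closed $Y\neq X$ extends to a closed $Y\cup\{x\}$'') characterizes convex geometries only for finite $X$; it fails for infinite algebraic convex geometries (e.g.\ convex subsets of $\mathbb Q$: no single point can be added to $\{q:q^2<2\}$ keeping convexity), so ``verify property (3)'' is not a correct target, and it is not ``equivalent'' to the one-element-cover condition. Your pivot to Proposition \ref{covCG} does rescue the intended target --- since $\mathcal F^*$ is algebraic by Theorem \ref{Sp in Sub}, establishing that all covers in $\mathcal F^*$ are one-element would indeed suffice, giving an alternative route to the paper's direct (AEP) verification --- but as it stands neither that cover property nor (AEP) is proved for either stage of the construction.
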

\begin{remark}
Using notation $\bigvee_{Sp}$ for the join operator in $\op{S_p} (\op{Pow} X)$, we could write  more compactly that $\mathcal{F}^* = \bigvee_{Sp} \{\mathcal{G}_i: i \in I\}$. Analogously, $\bigvee_{Sg}$ will stand for the join operator in  $\op{Sg}_{\bigwedge} (\op{Pow} X)$.  
\end{remark}
\begin{proof}
We split the argument into two parts.

First, we show that if $\mathcal{G}_i$, $i\in I$, are convex geometries on $X$, then
$\op{Sg}_{\bigwedge}(\bigcup_{i\in I}\mathcal{G}_i)= \bigvee_{Sg} \{\mathcal{G}_i: i \in I\}$ is also a convex geometry on $X$. Apparently, the latter family comprises the closed sets of some closure operator $\psi$, and, for every $A\subseteq X$, we have $\psi(A)=
\bigcap_{i\in I} \phi_i(A)$. Moreover, $\psi(\emptyset)=\emptyset$. Thus,  we only need to show that $\psi$ satisfies (AEP).

Take any $A=\psi(A), x,y\not \in A$ and $x \in \psi(A\cup \{y\})$. We claim that there exists $i\in I$ such that $x,y \not \in \phi_i(A)$. Indeed, suppose not, and $I=I_1\cup I_2$, where $x \in \phi_i(A)$, for $i\in I_1$, and $y \in \phi_j(A)$, for $j\in I_2$. Define closure operators $\tau$ and $\phi$ on $X$ as follows: $\tau(A)=\bigcap_{i\in I_1}\phi_i(A)$ and $\phi(A)=\bigcap_{j\in I_2}\phi_j(A)$, $A\subseteq X$. Apparently, $\psi(A)=\tau(A)\cap \phi(A)$ and
$x\in \tau(A)$, $y\in \phi(A)$. Then $x \in \psi(A\cup\{y\})\subseteq \phi(\phi(A)\cup\{y\})\subseteq \phi(\phi(A))=\phi(A)$, which implies $x \in \phi(A)\cap\tau(A)=\psi(A)$, a contradiction. 

Thus, we may assume that $x,y \not \in \phi_i(A)$, for some $i\in I$.
Since $x \in \phi_i(\phi_i(A)\cup \{y\})$, we apply (AEP) that holds for $\phi_i$ to conclude $y \not \in \phi_i(\phi_i(A)\cup \{x\})$. This implies $y \not \in \psi(A\cup\{x\})$, which is needed.

Secondly, consider $\mathcal{F}^*=\widetilde{\op{Sg}_{\bigwedge}(\mathcal{F})}$, assuming that $\op{Sg}_{\bigwedge}(\mathcal{F})$ represents the family of closed sets of convex geometry $(X, \psi)$. According to Theorem \ref{Sp in Sub}, $\mathcal{F}^*$ represents the family of closed sets of some algebraic closure operator $\rho$ on $X$. Apparently, $\rho(A)\subseteq \psi(A)$, for every $A\subseteq X$. We need to show that $\rho$ satisfies (AEP).

So take some $A=\rho(A)$, $x,y \not \in A$ and $x \in \rho(A\cup\{y\})$. If $A=\psi(A)$, then we use (AEP) for $\psi$ to conclude that $y \not \in \psi(A\cup \{x\})$. This implies $y \not \in \rho(A\cup\{x\})$.

Otherwise, $A=\bigcup_{i\in I} A_i$, for some up-directed family of $\psi$-closed sets $A_i$, $i\in I$.

Since $x,y \not \in A$, we have $x,y \not \in A_i$, for every $i\in I$. 

The sub-family $\psi(A_i\cup \{y\})$, $i \in I$, is up-directed in $\op{Sg}_{\bigwedge}\mathcal{F}$, hence,\\ $\bigcup_{i\in I}\psi((A_i\cup \{y\})\in \mathcal{F}^*$. Moreover, $A\cup \{y\} \subseteq \bigcup_{i\in I}\psi((A_i\cup \{y\})$. Therefore, $\rho(A\cup\{y\})\subseteq \bigcup_{i\in I}\psi((A_i\cup \{y\})$. This implies $x \in \psi((A_i\cup \{y\})$, for some $i\in I$. Pick any $j\in I$. Since the family $(A_i, i \in I)$ is up-directed, we can find another $k\in I$ such that $A_j\subseteq A_k$ and $x \in \psi(A_k\cup\{y\})$. Applying (AEP) that holds for $\psi$, we obtain $y \not \in \psi(A_k\cup\{x\})$, thus, also $y \not \in \psi(A_j\cup\{x  \})$. We conclude that $y \not \in \bigcup_{j\in I} \psi(A_j\cup\{x\})$, hence, also $y \not \in \rho(A\cup\{x\})$, which is due to $\rho(A\cup\{x\})\subseteq  \bigcup_{i\in I} \psi(A_i\cup\{x\})$.   

\end{proof}

\begin{cor}\label{ACG}
$\op{ACG} X$ is a complete $\bigvee$-subsemilattice in $\op{S_p}(\op{Pow} X)$.
\end{cor}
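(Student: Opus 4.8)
The plan is to deduce Corollary~\ref{ACG} directly from Theorem~\ref{JoinSp}. Recall that $\op{ACG} X$ is the set of families $\Cld(X,\phi)$ of closed sets of algebraic convex geometries on $X$, viewed as a sub-poset of $\op{S_p}(\op{Pow} X)$ under inclusion. To show it is a complete $\bigvee$-subsemilattice, I must verify two things: first, that for any family $\{\mathcal{G}_i : i\in I\}\subseteq \op{ACG} X$, the join $\bigvee_{Sp}\{\mathcal{G}_i : i\in I\}$ computed in $\op{S_p}(\op{Pow} X)$ actually lies in $\op{ACG} X$; second, that this join is also the supremum of $\{\mathcal{G}_i\}$ when the supremum is taken inside $\op{ACG} X$ itself, i.e.\ that $\op{ACG} X$ inherits the join rather than merely being closed under some coarser operation.

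First I would invoke Theorem~\ref{Sp in Sub}(2): with $\mathcal{F}=\bigcup_{i\in I}\mathcal{G}_i$, the least element of $\op{S_p}(\op{Pow} X)$ containing $\mathcal{F}$ is $\mathcal{F}^{*}=\widetilde{\op{Sg}_{\bigwedge}(\mathcal{F})}$, and since each $\mathcal{G}_i$ is itself an element of $\op{S_p}(\op{Pow} X)$, this $\mathcal{F}^{*}$ is precisely $\bigvee_{Sp}\{\mathcal{G}_i : i\in I\}$. Now Theorem~\ref{JoinSp} tells us exactly that $\mathcal{F}^{*}$ is (the family of closed sets of) a convex geometry on $X$; since $\mathcal{F}^{*}\in\op{S_p}(\op{Pow} X)$, Lemma~\ref{alg} guarantees its closure operator is algebraic. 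Hence $\mathcal{F}^{*}\in\op{ACG} X$. This handles the ``closed under $\bigvee_{Sp}$'' part, including the empty-family case, where $\mathcal{F}^{*}=\{X\}$ corresponds to the trivial convex geometry whose only closed set is $X$ — note this is the bottom of $\op{ACG} X$ and is algebraic.

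For the second point, I would observe that because $\op{ACG} X\subseteq\op{S_p}(\op{Pow} X)$ carries the restricted order and each $\mathcal{G}_i\subseteq\mathcal{F}^{*}$, the element $\mathcal{F}^{*}$ is an upper bound of $\{\mathcal{G}_i\}$ inside $\op{ACG} X$; and any other upper bound $\mathcal{H}\in\op{ACG} X$ of the $\mathcal{G}_i$ is in particular an element of $\op{S_p}(\op{Pow} X)$ containing $\mathcal{F}$, so by minimality of $\mathcal{F}^{*}$ we get $\mathcal{F}^{*}\subseteq\mathcal{H}$. Thus $\mathcal{F}^{*}=\bigvee\{\mathcal{G}_i : i\in I\}$ computed in $\op{ACG} X$ and this join coincides with $\bigvee_{Sp}\{\mathcal{G}_i : i\in I\}$, which is the definition of being a complete $\bigvee$-subsemilattice. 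I do not anticipate a serious obstacle here: the substantive content is entirely in Theorem~\ref{JoinSp}, and this corollary is the bookkeeping step that packages it together with Theorem~\ref{Sp in Sub} and Lemma~\ref{alg}. The only point requiring a moment's care is making explicit that the join of algebraic convex geometries in $\op{S_p}(\op{Pow} X)$ can genuinely be bigger than $\op{Sg}_{\bigwedge}(\mathcal{F})$ — one really must pass to $\widetilde{\op{Sg}_{\bigwedge}(\mathcal{F})}$ — but Theorem~\ref{JoinSp} was already proved in exactly the two stages ($\op{Sg}_{\bigwedge}$ first, then the up-directed-union closure) that match this, so no gap arises.
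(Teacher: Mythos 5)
Your argument is essentially the paper's: the corollary is an immediate consequence of Theorem~\ref{JoinSp} combined with Theorem~\ref{Sp in Sub}(2) (which identifies $\bigvee_{Sp}\{\mathcal{G}_i\}$ with $\mathcal{F}^{*}=\widetilde{\op{Sg}_{\bigwedge}(\bigcup_i\mathcal{G}_i)}$) and Lemma~\ref{alg} (which gives algebraicity), and your upper-bound bookkeeping for the join inside $\op{ACG} X$ is the same routine step the paper leaves implicit. One side remark is off, though it does not affect the substance: the empty join $\{X\}$ is \emph{not} a convex geometry in the paper's sense, since a convex geometry is required to be a zero-closure space and here $\phi(\emptyset)=X\neq\emptyset$; consistently with this, $\op{ACG} X$ has no least element (its minimal elements are the geometries $\op{Id}(X,\leq)$ for total orders $\leq$), so the corollary must be read as closure under arbitrary \emph{nonempty} joins.
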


We note that while $\mathcal{G}\cap \mathcal{F}$ is a family of some algebraic closure operator, if $\mathcal{G}$ and $\mathcal{F}$ are such, it does not necessarily gives the family of closed sets of a convex geometry, when both $\mathcal{G}, \mathcal{F}$ are convex geometries,  i.e., $\op{ACG} X$ does not form a meet subsemillattice in $\op{S_p} (\op{Pow} X)$.
Indeed, if $X=\{1,2\}$, and $\mathcal{G}=\{\emptyset, \{1\}, X\}$,  $\mathcal{F}=\{\emptyset, \{2\}, X\}$, then both
$\mathcal{G},\mathcal{F}$ are convex geometries, while $\mathcal{G}\cap \mathcal{F}=\{\emptyset, X\}$ is not.

It was proved in \cite[Theorem 2.2]{EJ85} that every maximal chain of a finite convex geometry on set $X$ has the length equal to $|X|$. Equivalently, each maximal chain has $|X|$ covering pairs. Similar result holds in case of algebraic convex geometries.

\begin{lm}\label{Xordering} 
Let $G=(X,\phi)$ be an algebraic convex geometry. For a maximal chain $\mathcal{C}$ in $L=\op{Cld}(X,\phi)$, let $\mathcal{C}^*=\{D\in \mathcal{C}: D_*\prec D \text{ for some } D_*\in \mathcal{C}\}$.
Define a mapping $h_C:X\rto \mathcal{C}$ as
\[
 h_C(x)=\bigcap \{C'\in \mathcal{C}: x \in C'\}, x \in X. 
\]
Then $h_C$ is one-to-one and onto mapping from $X$ to $\mathcal{C}^*$. 
\end{lm}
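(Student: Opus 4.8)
The plan is to establish the four separate assertions --- $h_C$ is well-defined with values in $\mathcal{C}^*$, it is injective, and it is surjective onto $\mathcal{C}^*$ --- using the algebraicity of $\phi$ to control the structure of the maximal chain, and Proposition~\ref{covCG} to know that covers add exactly one point.

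\smallskip

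\emph{Step 1: $h_C(x)$ is a genuine element of $\mathcal{C}$.} Since $\mathcal{C}$ is a maximal chain in the complete lattice $L$, it is closed under arbitrary meets and joins taken in $L$; in particular $h_C(x)=\bigcap\{C'\in\mathcal{C}:x\in C'\}$ lies in $\mathcal{C}$, and clearly $x\in h_C(x)$ because, by algebraicity, a down-directed intersection of closed sets all containing $x$ still contains $x$ --- more simply, every member of the intersected family contains $x$, so the intersection does.

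\smallskip

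\emph{Step 2: $h_C(x)\in\mathcal{C}^*$, i.e.\ $h_C(x)$ has a lower cover in $\mathcal{C}$.} Write $D=h_C(x)$ and let $\mathcal{C}'=\{C'\in\mathcal{C}:C'\subsetneq D\}$. Then $\bigcup\mathcal{C}'\in\mathcal{C}$ since $\mathcal{C}$ is a maximal chain and, by algebraicity, the union of the up-directed family $\mathcal{C}'$ of closed sets is closed, hence equals $\bigvee\mathcal{C}'$ in $L$. Call this set $D_*$. If $D_*=D$ then $x\in\bigcup\mathcal{C}'$, so $x\in C'$ for some $C'\in\mathcal{C}'$ with $C'\subsetneq D$, contradicting the minimality defining $D=h_C(x)$. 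Hence $D_*\subsetneq D$, and by maximality of the chain there is nothing strictly between $D_*$ and $D$ in $\mathcal{C}$; since $\mathcal{C}$ is maximal in $L$ this forces $D_*\prec D$ in $L$. Thus $D\in\mathcal{C}^*$. Along the way this also shows $D_*=D\setminus\{x\}$: by Proposition~\ref{covCG}, $|D\setminus D_*|=1$, and $x\in D\setminus D_*$ because $x\notin D_*$ (as $x\notin C'$ for all $C'\in\mathcal{C}'$), so $D\setminus D_*=\{x\}$.

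\smallskip

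\emph{Step 3: injectivity.} Suppose $h_C(x)=h_C(y)=D$ with $x\neq y$. By the last sentence of Step~2, $D\setminus D_*=\{x\}$ and also $D\setminus D_*=\{y\}$, forcing $x=y$, a contradiction.

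\smallskip

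\emph{Step 4: surjectivity onto $\mathcal{C}^*$.} Let $D\in\mathcal{C}^*$, so $D_*\prec D$ with $D_*\in\mathcal{C}$. By Proposition~\ref{covCG}, $D\setminus D_*=\{x\}$ for a unique $x$. I claim $h_C(x)=D$. Certainly $x\in D$, so $h_C(x)\subseteq D$. Conversely, every $C'\in\mathcal{C}$ with $x\in C'$ must satisfy $C'\supseteq D$: if not, then $C'\subsetneq D$ by comparability, hence $C'\subseteq D_*$ (as $D_*$ is the lower cover of $D$ in $\mathcal{C}$), but $x\notin D_*$, contradiction. Therefore $h_C(x)=\bigcap\{C':x\in C'\}\supseteq D$, giving $h_C(x)=D$.

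\smallskip

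\emph{Main obstacle.} The delicate point is Step~2: showing that the ``predecessor'' $D_*=\bigcup\{C'\in\mathcal{C}:C'\subsetneq D\}$ is actually a \emph{cover} of $D$ in $L$ (not merely that $D_*\subsetneq D$), so that Proposition~\ref{covCG} applies and yields $|D\setminus D_*|=1$. This is exactly where algebraicity is essential --- it guarantees $D_*$ is closed and hence belongs to the maximal chain, which in turn (by maximality of the chain in $L$) forbids anything strictly between $D_*$ and $D$. Without algebraicity the union of a chain of closed sets need not be closed, $D_*$ could fail to lie in $\mathcal{C}$, and the cover structure would break down; this is precisely the phenomenon illustrated by the non-algebraic Example after Proposition~\ref{alg geom stan}.
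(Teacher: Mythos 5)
Your proof is correct and follows essentially the same route as the paper: show that $D=h_C(x)$ has a lower cover $D_*$ in the chain with $D\setminus D_*=\{x\}$ (giving both $D\in\mathcal{C}^*$ and injectivity), then get surjectivity from Proposition~\ref{covCG} applied to any covering pair of $\mathcal{C}$. The only cosmetic difference is that you exhibit the lower cover directly as the closed directed union $\bigcup\{C'\in\mathcal{C}:C'\subsetneq D\}$, whereas the paper derives its existence by contradiction from compactness of $\phi(x)$ --- two equivalent uses of algebraicity.
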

\begin{proof}
First, we observe that $h_C$ is well-defined, due to maximality of chain $\mathcal{C}$. Secondly, if $C_0=h_C(x)$ does not have a lower cover in $\mathcal{C}$, then $C_0=\bigvee_L \{C''\in \mathcal{C}: x \not \in C''\}$. This contradicts to the fact that $\phi(x)\subseteq C_0$ is a compact element of $L$. Therefore, $C_0$ has a lower cover $C_*\prec C_0$ in $\mathcal{C}$, and $C_0=C_*\cup \{x\}$, by Proposition \ref{covCG}. This implies that $h_C(x)\not = h_C(y)$, for $x\not = y$.

Finally, if $D_*\prec D$ is any covering pair from $\mathcal{C}$, then $D=D_*\cup\{t\}$, for some $t \in X$. Hence, $h_C(t)=D$, and $h_C$ is onto. 
\end{proof}

Our next goal will be to establish stronger connection between maximal chains of (algebraic) convex geometry and \emph{compatible ordering} of the base set of the geometry.

 There is natural way to define algebraic convex geometry for any partially ordered (in particular, total ordered) set $(X,\leq)$ that is generalization of \emph{downset alignment} (correspondingly, \emph{monotone alignment}) of \cite{EJ85}.

\begin{df}\label{ideal}
Given partially ordered set $(P,\leq)$, a pair $(P, \phi)$, where $\phi(Q)=\downarrow Q= \{p\in P: p\leq q \text{ for some } q \in Q\}$, is called an \emph{ideal} closure system and its lattice closed sets is denoted $\op{Id}(P,\leq)$.
\end{df}

Evidently, the lattice of closed sets of $\op{Id}(P,\leq)$ is algebraic. It is also straightforward to check that operator $\phi$ in Definition \ref{ideal} satisfies (AEP), i.e., $(P, \phi)$ is a convex geometry. Moreover, if $(P,\leq)$ is a chain, then $\op{Id}(P,\leq)$ coincides with the lattice of ideals of this chain, where the latter is treated as a lattice. 

\begin{df}\label{ordering}
Given closure system $(X,\phi)$, the total ordering $\leq$ of the base set $X$ is called \emph{compatible} with the system, if $\op{Id}(X,\leq) \subseteq \Cld(X,\phi)$.
\end{df}

The following statement is a part of \cite[Theorem 1]{W01}, but without any assumptions on the closure system.

\begin{lm} If $(X,\leq)$ is a total ordering compatible with closure system $(X,\phi)$, then $\op{Id}(X,\leq)$ is a maximal chain in $\Cld(X,\phi)$.
\end{lm}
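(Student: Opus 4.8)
The plan is to show two things: first, that $\op{Id}(X,\leq)$ is a chain inside $\Cld(X,\phi)$, and second, that it is maximal, i.e. no closed set can be inserted strictly between two consecutive ideals. The containment $\op{Id}(X,\leq)\subseteq\Cld(X,\phi)$ is given by compatibility (Definition \ref{ordering}), so the first point reduces to the observation that the down-sets of a \emph{totally} ordered set $(X,\leq)$ form a chain under $\subseteq$: if $D_1,D_2$ are down-sets and $a\in D_1\setminus D_2$, then for any $b\in D_2$ we must have $b\leq a$ (otherwise $a<b\in D_2$ would force $a\in D_2$), hence $D_2\subseteq D_1$. So $\op{Id}(X,\leq)$ is linearly ordered by inclusion.

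For maximality, suppose toward a contradiction that $Y=\phi(Y)\in\Cld(X,\phi)$ is a closed set not in $\op{Id}(X,\leq)$, such that $\op{Id}(X,\leq)\cup\{Y\}$ is still a chain; I want to derive a contradiction, or rather show $Y$ must already be an ideal. The key idea is: since $\op{Id}(X,\leq)\cup\{Y\}$ is a chain, $Y$ is comparable with every down-set $\downarrow\{x\}$, $x\in X$. First I would argue that $Y$ is itself a down-set of $(X,\leq)$. Take $x\in Y$ and $z\leq x$; I must show $z\in Y$. Now $\downarrow\{x\}$ is a down-set, and either $\downarrow\{x\}\subseteq Y$ or $Y\subseteq\downarrow\{x\}$. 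In the first case $z\in\downarrow\{x\}\subseteq Y$ and we are done. In the second case $Y\subseteq\downarrow\{x\}$ while $x\in Y$, which forces $Y\subseteq\downarrow\{x\}$ and, combined with comparability to $\downarrow\{x'\}$ for all $x'$, pins $Y$ down; more directly, set $Y=\bigcup_{x\in Y}\downarrow\{x\}$ would follow once we know $\downarrow\{x\}\subseteq Y$ for every $x\in Y$, so it suffices to rule out $Y\subsetneq\downarrow\{x\}$ with $x\in Y$. But if $Y\subsetneq\downarrow\{x\}$, pick $w\in\downarrow\{x\}\setminus Y$, so $w<x$ and $w\notin Y$; then $\downarrow\{w\}$ and $Y$ are comparable, and since $w\in\downarrow\{w\}\setminus Y$ we get $\downarrow\{w\}\not\subseteq Y$, hence $Y\subseteq\downarrow\{w\}$, contradicting $x\in Y$ and $x\not\leq w$. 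Thus every $Y$ comparable to all principal ideals is itself a down-set, so $Y\in\op{Id}(X,\leq)$, contradicting the choice of $Y$. Therefore $\op{Id}(X,\leq)$ is a maximal chain.

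Alternatively, and perhaps more cleanly, I would phrase maximality as: for any closed $Y$ with $\op{Id}(X,\leq)\cup\{Y\}$ a chain, set $D=\bigcup\{\downarrow\{x\}:x\in Y\}$. Since each $\downarrow\{x\}$ is comparable to $Y$ and contains an element of $Y$, it cannot be that $Y\subsetneq\downarrow\{x\}$ (the argument above), so $\downarrow\{x\}\subseteq Y$ for all $x\in Y$, giving $D\subseteq Y$; the reverse inclusion $Y\subseteq D$ is trivial. Hence $Y=D\in\op{Id}(X,\leq)$.

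The main obstacle, and the only place where anything subtle happens, is the step ruling out $Y\subsetneq\downarrow\{x\}$ for $x\in Y$: one has to exploit totality of $\leq$ twice and juggle two principal ideals $\downarrow\{x\}$ and $\downarrow\{w\}$ at once. Everything else — the containment, the chain property of down-sets, and the assembly $Y=\bigcup_{x\in Y}\downarrow\{x\}$ — is routine and uses no hypothesis on $\phi$ beyond compatibility, consistent with the remark in the paper that this holds for arbitrary closure systems.
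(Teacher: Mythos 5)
Your proof is correct, but it takes a genuinely different route from the paper's. The paper argues via covering pairs: given a closed set $T\notin\op{Id}(X,\leq)$ such that $\op{Id}(X,\leq)\cup\{T\}$ is a chain, it uses the stability of $\op{Id}(X,\leq)$ under arbitrary unions and intersections to sandwich $T$ between a covering pair $C_1\prec C_2$ of ideals, and then uses totality of $\leq$ to show that $C_2\setminus C_1$ is a singleton, leaving no room for $T$; this one-element-jump phenomenon is the same observation that resurfaces in Proposition \ref{covCG} and Lemma \ref{Xordering}. You instead show directly that any set $Y$ comparable with every principal ideal $\downarrow\{x\}$ must itself be a down-set, by playing the two principal ideals $\downarrow\{x\}$ (for $x\in Y$) and $\downarrow\{w\}$ (for $w\in\downarrow\{x\}\setminus Y$) against each other; your treatment of the one delicate case, $Y\subsetneq\downarrow\{x\}$ with $x\in Y$, is correct. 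Your version never invokes completeness of $\op{Id}(X,\leq)$ under arbitrary meets and joins, only comparability with principal ideals, and it in fact establishes the marginally stronger fact that $\op{Id}(X,\leq)$ is a maximal chain in $\op{Pow} X$ itself, hence in any closure system whose closed sets contain it. What the paper's argument buys in exchange is brevity (it quotes the lattice-completeness of the ideal chain) and an explicit appearance of the covering-pair structure that the rest of the paper exploits; what yours buys is a more self-contained and slightly more general argument. Either proof is acceptable.
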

\begin{proof}
Suppose the compatible ordering gives the chain $\op{Id}(X,\leq)$ in $\Cld(X,\phi)$, which is not maximal, i.e. there exists closed set $T$ such that $\{T\}\cup \op{Id}(X,\leq)$ is a chain as well. Since $\op{Id}(X,\leq)$ is stable under arbitrary joins and meets, there exists $C_1,C_2 \in \op{Id}(X,\leq)$ such that they form a covering pair in $\op{Id}(X,\leq)$, and $C_1\subset T\subset C_2$. If $C_2\setminus C_1$ has two different elements $x_1,x_2$, then, assuming that $x_1\leq x_2$ in $(X,\leq)$, we obtain $C_1\subset \downarrow x_1 \subset C_2$, so that $C_1\subset C_2$ cannot be a covering pair in $\op{Id}(X,\leq)$. Hence, $C_2=C_1\cup \{x\}$, for some $x \in X$, and $T=C_1$ or $T=C_2$.
\end{proof}

Inverse statement is also true, under additional assumption on closure system.

\begin{lm}\label{max-chains}\cite{W01} If $(X,\phi)$ is an algebraic convex geometry, then, for every maximal chain $\mathcal{C} \subseteq \Cld (X,\phi)$ there exists the total ordering $\leq_C$ on $X$ such that $\op{Id} (X,\phi)=\mathcal{C}$.
\end{lm}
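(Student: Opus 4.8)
The plan is to use the covering data of the maximal chain $\mathcal{C}$ to \emph{define} a total order on $X$ through the bijection $h_C$ of Lemma~\ref{Xordering}, and then to check that the down-sets of that order are exactly the members of $\mathcal{C}$; so the goal is a total order $\leq_C$ on $X$ with $\op{Id}(X,\leq_C)=\mathcal{C}$.

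First I would set $x\leq_C y$ iff $h_C(x)\subseteq h_C(y)$. Since $h_C\colon X\to\mathcal{C}^*$ is a bijection by Lemma~\ref{Xordering} and $\mathcal{C}$ is a chain, $\leq_C$ is a total order on $X$. The key preliminary fact, on which the rest rests, is the identification
\[
 \{z\in X:z\leq_C x\}=h_C(x)\qquad(x\in X),
\]
of the principal $\leq_C$-down-set of $x$ with the set $h_C(x)$ itself. For one inclusion: $X=\phi(X)$ is the top of $\Cld(X,\phi)$ and lies in $\mathcal{C}$, so $h_C(z)$ is an intersection of a nonempty family of members of $\mathcal{C}$ each containing $z$, whence $z\in h_C(z)$; then $z\leq_C x$ gives $z\in h_C(z)\subseteq h_C(x)$. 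For the other: if $z\in h_C(x)$, then $h_C(x)$ is one of the sets intersected to produce $h_C(z)$, so $h_C(z)\subseteq h_C(x)$, i.e.\ $z\leq_C x$.

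Granting this, the two inclusions follow. For $\mathcal{C}\subseteq\op{Id}(X,\leq_C)$: any $D\in\mathcal{C}$ is a $\leq_C$-down-set, because $y\in D$ puts $D$ among the sets defining $h_C(y)$, so $h_C(y)\subseteq D$, and then $z\leq_C y$ gives $z\in h_C(z)\subseteq h_C(y)\subseteq D$. For $\op{Id}(X,\leq_C)\subseteq\mathcal{C}$: let $D$ be a $\leq_C$-down-set. If $D=\0$ it is the least element of $\Cld(X,\phi)$ and lies in $\mathcal{C}$; if $D$ has a greatest element $x$, then $D=\{z:z\leq_C x\}=h_C(x)\in\mathcal{C}^*\subseteq\mathcal{C}$; otherwise $D=\bigcup\{h_C(x):x\in D\}$ is a union of a chain of closed sets, hence a closed set by algebraicity of $\phi$, and $D$ is comparable in $\Cld(X,\phi)$ with every $C'\in\mathcal{C}$ --- if $C'\subseteq h_C(x)$ for some $x\in D$ then $C'\subseteq h_C(x)\subseteq D$, and otherwise $h_C(x)\subsetneq C'$ for all $x\in D$, whence $D\subseteq C'$. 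So $\mathcal{C}\cup\{D\}$ is a chain in $\Cld(X,\phi)$, and maximality of $\mathcal{C}$ forces $D\in\mathcal{C}$. Combining the two inclusions gives $\op{Id}(X,\leq_C)=\mathcal{C}$.

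The routine portions are that $\leq_C$ is a total order and the inclusion $\mathcal{C}\subseteq\op{Id}(X,\leq_C)$; the identification of principal down-sets is a short two-inclusion argument once one notices that $z\in h_C(z)$. The step I expect to carry the real weight is $\op{Id}(X,\leq_C)\subseteq\mathcal{C}$, where the crux is to fuse the algebraicity of $\phi$ (so the increasing union $\bigcup\{h_C(x):x\in D\}$ stays a closed set) with the maximality of $\mathcal{C}$ (so that a closed set comparable with every member of $\mathcal{C}$ must itself belong to $\mathcal{C}$) --- and, further upstream, to have available Lemma~\ref{Xordering}, whose proof leans on Proposition~\ref{covCG} and on compactness of $\phi(\{x\})$, in order to know that $h_C$ is a bijection onto $\mathcal{C}^*$ at all.
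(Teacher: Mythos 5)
Your proposal is correct and follows exactly the route the paper indicates: it defines $x\leq_C y$ iff $h_C(x)\subseteq h_C(y)$ using the bijection of Lemma~\ref{Xordering}, which is precisely the paper's own sketch (the paper defers the detailed verification to Wahl \cite{W01}). Your completion of that verification --- identifying principal down-sets with the sets $h_C(x)$, and using algebraicity (union of a chain of closed sets is closed) together with maximality of $\mathcal{C}$ to get $\op{Id}(X,\leq_C)\subseteq\mathcal{C}$ --- is sound.
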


Indeed, the total ordering $\leq_C$ can be defined using mapping $h_C$ from Lemma \ref{Xordering}:
$x\leq_C y$ iff $h_C(x)\subseteq h_C(y)$.

\begin{cor}
The minimal elements of $\op{ACG} X$ are $\op{Id}(X,\leq)$, where $\leq$ ranges over all possible total orders on $X$.
\end{cor}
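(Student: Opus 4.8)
The plan is to establish the characterization in two directions: first, that every minimal element of $\op{ACG} X$ equals $\op{Id}(X,\leq)$ for some total order $\leq$; second, that each such $\op{Id}(X,\leq)$ is in fact minimal in $\op{ACG} X$.

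\emph{Step 1 (minimal elements come from total orders).} Let $\mathcal{G}=\Cld(X,\phi)$ be a minimal element of $\op{ACG} X$. Since $\Cld(X,\phi)$ is a complete lattice, Zorn's Lemma provides a maximal chain $\mathcal{C}\subseteq\Cld(X,\phi)$. By Lemma~\ref{max-chains} there is a total order $\leq_C$ on $X$ with $\op{Id}(X,\leq_C)=\mathcal{C}$. As observed after Definition~\ref{ideal}, the ideal closure system of a total order is an algebraic convex geometry, so $\op{Id}(X,\leq_C)\in\op{ACG} X$, while $\op{Id}(X,\leq_C)=\mathcal{C}\subseteq\mathcal{G}$. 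Minimality of $\mathcal{G}$ then forces $\mathcal{G}=\op{Id}(X,\leq_C)$.

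\emph{Step 2 (each $\op{Id}(X,\leq)$ is minimal).} Fix a total order $\leq$ on $X$, so that $\op{Id}(X,\leq)\in\op{ACG} X$, and suppose $\mathcal{G}=\Cld(X,\psi)\in\op{ACG} X$ with $\mathcal{G}\subseteq\op{Id}(X,\leq)$. Since $\op{Id}(X,\leq)$ is a chain, so is $\mathcal{G}$; hence every nonempty subfamily of $\mathcal{G}$ is up-directed, and as $\mathcal{G}$ is algebraic it is closed under arbitrary unions as well as arbitrary intersections. If $\mathcal{G}\ne\op{Id}(X,\leq)$, choose $D\in\op{Id}(X,\leq)\setminus\mathcal{G}$ and put
\[
A=\bigcup\{C\in\mathcal{G}:C\subseteq D\},\qquad B=\bigcap\{C\in\mathcal{G}:D\subseteq C\}.
\]
Both indexing families are nonempty because $\emptyset,X\in\mathcal{G}$, so $A,B\in\mathcal{G}$ and $A\subseteq D\subseteq B$; moreover $A\ne B$ since $D\notin\mathcal{G}$, hence $A\subsetneq D\subsetneq B$. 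No $C\in\mathcal{G}$ lies strictly between $A$ and $B$: such a $C$ is comparable with $D$ in the chain $\op{Id}(X,\leq)$, and $C\subseteq D$ would give $C\subseteq A$ while $D\subseteq C$ would give $B\subseteq C$, both impossible. Thus $A\prec B$ in $\Cld(X,\psi)$, and since $\mathcal{G}$ is a convex geometry Proposition~\ref{covCG} yields $|B\setminus A|=1$, contradicting $A\subsetneq D\subsetneq B$. Therefore $\mathcal{G}=\op{Id}(X,\leq)$, proving that $\op{Id}(X,\leq)$ is minimal.

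\emph{Main obstacle.} The delicate point is Step 2: one must notice that an algebraic convex geometry contained in $\op{Id}(X,\leq)$ is automatically a chain, and therefore closed under arbitrary unions, which is what makes the sets $A$ and $B$ available; with that in hand, a missing down-set $D$ gets squeezed inside a covering pair $A\prec B$ of $\mathcal{G}$, and Proposition~\ref{covCG} delivers the contradiction. Step 1 is a routine application of Lemma~\ref{max-chains} together with the fact that ideal closure systems belong to $\op{ACG} X$.
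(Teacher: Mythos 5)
Your proof is correct. The paper states this corollary without an explicit proof, presenting it as an immediate consequence of Lemma \ref{max-chains}: your Step 1 (a minimal element contains a maximal chain, which by Lemma \ref{max-chains} is $\op{Id}(X,\leq_C)$, itself an algebraic convex geometry, so minimality forces equality) is exactly that intended deduction. Step 2, the minimality of each $\op{Id}(X,\leq)$, is the half the paper leaves implicit, and your argument is sound: a subgeometry $\mathcal{G}\subseteq\op{Id}(X,\leq)$ is a chain, hence closed under arbitrary unions by algebraicity, the sets $A$ and $B$ squeeze a missing down-set $D$ into a covering pair $A\prec B$ of $\mathcal{G}$, and Proposition \ref{covCG} gives $|B\setminus A|=1$, contradicting $A\subsetneq D\subsetneq B$. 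An alternative, slightly shorter route for Step 2 stays entirely within the chain machinery: since $\mathcal{G}$ is a chain, it is a maximal chain in its own lattice of closed sets, so Lemma \ref{max-chains} gives $\mathcal{G}=\op{Id}(X,\leq')$ for some total order $\leq'$, and the containment $\op{Id}(X,\leq')\subseteq\op{Id}(X,\leq)$ forces ${\leq'}={\leq}$ because each principal ideal of $\leq'$ must be a $\leq$-down-set and two comparable total orders on the same set coincide. Your version buys a self-contained contradiction from the covering-pair characterization of algebraic convex geometries; the alternative reuses the same lemma in both directions and makes the corollary a one-line consequence, which is presumably why the paper records no proof.
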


The following result extends P.~H.~Edelman and R.~E.~Jamison \cite[Theorem 5.2]{EJ85} to the case of algebraic convex geometries. We note that join operator in the statement below differs from one defined in \cite[Theorem 3]{W01}. More precisely, we replace operator $\bigvee_{Sg}$ in the latter publication by $\bigvee_{Sp}$. Apparently, operator $\bigvee_{Sg}$ cannot be used in a property sufficient for \emph{algebraic} closure system, since it only produces a minimal closure system from the given joinands. The algebraicity may not be achieved as it was manifested in the example given in \cite{W01}.

\begin{thm}\label{v-rep} 
Let $G=(X,\phi)$ be a closure system. The following are equivalent:
\begin{itemize}
\item[(1)] $G$ is an algebraic convex geometry;
\item[(2)] $G=\bigvee_{Sp}\{\op{Id}(X,\leq_i): i\in I\}$, where $\{\leq_i: i \in I\}$ is some set of total orderings on set $X$.
\end{itemize}

\end{thm}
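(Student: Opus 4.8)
The plan is to prove the two implications separately, leaning on the infrastructure already built up in the paper.

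\textbf{Direction (2) $\Rightarrow$ (1).} Suppose $G=\bigvee_{Sp}\{\op{Id}(X,\leq_i): i\in I\}$. Each $\op{Id}(X,\leq_i)$ is the family of closed sets of an algebraic convex geometry, as observed right after Definition~\ref{ideal}. So $G$ is, by construction, the smallest element of $\op{S_p}(\op{Pow} X)$ containing $\mathcal{F}=\bigcup_{i\in I}\op{Id}(X,\leq_i)$; that is, $G=\mathcal{F}^*$ in the notation of Theorem~\ref{JoinSp}. But Theorem~\ref{JoinSp} states precisely that $\mathcal{F}^*$ is then a convex geometry, and membership in $\op{S_p}(\op{Pow} X)$ already guarantees (via Lemma~\ref{alg}) that the associated closure operator is algebraic. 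Hence $G$ is an algebraic convex geometry. This half is essentially a citation of the machinery already in place.

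\textbf{Direction (1) $\Rightarrow$ (2).} Suppose $G=(X,\phi)$ is an algebraic convex geometry, and let $\{\mathcal{C}_i: i\in I\}$ be the set of \emph{all} maximal chains in $L=\op{Cld}(X,\phi)$. By Lemma~\ref{max-chains}, each $\mathcal{C}_i$ equals $\op{Id}(X,\leq_i)$ for a suitable total order $\leq_i$ on $X$; in particular each $\leq_i$ is compatible with $G$, so $\op{Id}(X,\leq_i)\subseteq \op{Cld}(X,\phi)$. Let $H=\bigvee_{Sp}\{\op{Id}(X,\leq_i): i\in I\}$. Since each joinand is contained in $\op{Cld}(X,\phi)$, and $\op{Cld}(X,\phi)$ is itself an element of $\op{S_p}(\op{Pow} X)$ (Lemma~\ref{alg}), minimality of the $\op{S_p}$-join gives $H\subseteq \op{Cld}(X,\phi)=G$. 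For the reverse inclusion we must show every closed set $A=\phi(A)$ already lies in $H$. The key point is that $A$ is the union of a \emph{directed} family of ``smaller'' closed sets, each of which is reachable by a maximal chain: because $L$ is algebraic it is weakly atomic, so every closed set properly below $A$ can be extended inside $[\,\cdot\,,A]$ through a maximal chain of $L$, and by Lemma~\ref{Xordering} every covering step along such a chain adds exactly one point. More carefully, I would argue that for each finite $F\subseteq A$ the closed set $\phi(F)$ sits on some maximal chain $\mathcal{C}_i$ of $L$ (extend $\emptyset\prec\cdots$ by repeatedly adjoining one element of $F$ at a time, using Proposition~\ref{covCG} and maximality, then continue the chain arbitrarily to the top), hence $\phi(F)\in\op{Id}(X,\leq_i)\subseteq H$; then $A=\bigcup\{\phi(F): F\subseteq A,\ |F|<\omega\}$ is an up-directed union of members of $H$, and since $H\in\op{S_p}(\op{Pow} X)$ is closed under unions of up-directed subfamilies, $A\in H$. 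This gives $G\subseteq H$, hence $G=H$.

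\textbf{Main obstacle.} The delicate step is the claim that for every finite $F\subseteq A$ the set $\phi(F)$ lies on a maximal chain of $L$ — equivalently, that the finite chain $\emptyset=\phi(\emptyset)\prec\phi(F_1)\prec\phi(F_2)\prec\cdots\prec\phi(F)$ (built by adjoining the points of $F$ one at a time) can be taken to consist of genuine covers and then extended to a maximal chain of all of $L$. Getting honest covers requires that adding a single new point to a closed set jumps to its closure without skipping intermediate closed sets; this is where I would invoke Proposition~\ref{covCG} together with the anti-exchange property, possibly refining $F$ or re-ordering its elements so that each $\phi(F_{k+1})$ covers $\phi(F_k)$. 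The extension of a finite chain in the interval $[\,\cdot\,,A]$ and then upward to a maximal chain of $L$ uses weak atomicity of the algebraic lattice $L$ plus a Zorn's-lemma argument, which is routine. Once the chain-membership claim is secured, the up-directed-union closure of $\op{S_p}(\op{Pow} X)$ finishes the proof with no further effort.
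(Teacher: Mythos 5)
Your direction (2) $\Rightarrow$ (1) is exactly the paper's argument (Theorem \ref{JoinSp} plus Lemma \ref{alg}), and your overall plan for (1) $\Rightarrow$ (2) — take all maximal chains, identify them with $\op{Id}(X,\leq_i)$ via Lemma \ref{max-chains}, get $H\subseteq G$ by minimality of the $Sp$-join — is sound. The genuine gap is the step you yourself flag as the ``main obstacle'', and the repair you sketch for it does not work. You propose to show that $\phi(F)$ lies on a maximal chain by building a finite chain of \emph{covers} $\emptyset\prec\phi(F_1)\prec\cdots\prec\phi(F)$, adjoining one element of $F$ at a time (possibly after reordering or refining $F$). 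By Proposition \ref{covCG}, every cover in $\Cld(X,\phi)$ adds exactly one point, so a finite cover-chain starting at $\emptyset$ can only reach finite closed sets. In the algebraic convex geometry of convex subsets of $\mathbb{R}$, $\phi(\{0,1\})=[0,1]$ is uncountable, so no finite cover-chain from $\emptyset$ reaches it, and no reordering or refinement of $F=\{0,1\}$ helps; in fact $\{0\}$ has no upper cover at all in that lattice. So the construction you would ``more carefully'' carry out is unworkable as stated, and with it unproved your (1) $\Rightarrow$ (2) is incomplete.

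The irony is that the fact you need is much easier than what you tried to prove, and the whole detour through finite subsets and up-directed unions is unnecessary. By Zorn's lemma (a chain of chains unions to a chain), \emph{any} closed set $A\in\Cld(X,\phi)$ — not just a closure of a finite set — lies on some maximal chain $\mathcal{C}$ of $L$: a chain maximal among those containing $A$ is maximal outright. Lemma \ref{max-chains} then gives $\mathcal{C}=\op{Id}(X,\leq_C)$ for a compatible total order, so $A\in\bigcup_i\op{Id}(X,\leq_i)$ directly; hence $G=\bigcup_i\op{Id}(X,\leq_i)$, and since $G$ itself belongs to $\op{S_p}(\op{Pow} X)$, this union already has $Sp$-join equal to $G$. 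That one-line Zorn argument is precisely the paper's proof; no covers, no weak atomicity, and no appeal to closure under up-directed unions is needed for this direction.
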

\begin{proof}
(2) implies (1) due to Theorem \ref{JoinSp}. In other direction, one can take as set $\{\leq_i: i \in I\}$ all compatible orderings of given convex geometry. According to Lemma \ref{max-chains}, all such orderings correspond to maximal chains in $\Cld(X,\phi)$. Since every $Y\in \Cld (X,\phi)$ belongs to some maximal chain $\mathcal{C}$, $Y$ will be in $\op{Id}(X,\leq_C)$ for the corresponding ordering $\leq_C$. With this choice of set of total orderings, we obtain
$G=\bigcup\{\op{Id}(X,\leq_i): i\in I\}=\bigvee_{Sp}\{\op{Id}(X,\leq_i): i\in I\}$.

\end{proof}

It will be an interesting direction of future studies to explore the possibility to represent algebraic convex geometry by the means of the minimal number of total orderings on its base set. We consider a few examples below.

\begin{exm}
\end{exm} 
Consider the convex geometry $G=(\mathbb{R}, \phi)$ of convex sets of the chain of real numbers $(\mathbb{R},\leq)$. Natural ordering of real numbers $\leq$ is compatible with $G$ via maximal chain $C_\leq = \{(-\infty,r): r \in \mathbb{R}\}$. Reversed ordering $\leq^r$ ($t_1 \leq^r t_2$ iff $t_2\leq t_1$) is also compatible via maximal chain 
$C_{\leq^r}=\{(r,\infty): r \in \mathbb{R}\}$. Apparently, $G= C_1\vee C_2=\op{Id}(\mathbb{R},\leq) \vee_{Sp} \op{Id}(\mathbb{R},\leq)$. 
Chains $C_1,C_2$ contain all completely meet irreducible elements of $\Cld(\mathbb{R},\phi)$, and operator $\vee_{Sp}$ in this case is reduced to taking \emph{finite intersections} of elements in $C_1\cup C_2$.

\begin{exm}
\end{exm}
Consider convex geometry $G=\Pow \mathbb{N}$, i.e. the closure system on the set of natural numbers $\mathbb{N}$, with identical closure operator. In order to represent this geometry by means of maximal chains, one can take one maximal chain $C_n$ per each meet irreducible element  $k_n=\mathbb{N}\setminus\{n\}$, so that $k_n \in C_n$. In this case $G=\bigvee_{Sp} \{C_n: n\geq 1\}$, and operator $\bigvee_{Sp}$ is reduced to taking arbitrary meets of elements from $\bigcup \{C_n: n \in \mathbb{N}\}$, i.e., it acts equivalently to $\bigvee_{Sg}$.

However, the number of chains in this representation may be reduced. For example, one can choose $C_2,C_3, ...$ in such a way that $\mathbb{N}\setminus\{1,2\}$ is in $C_2$, $\mathbb{N}\setminus\{1,3\}$ is in $C_3$ etc. Then $k_1=\mathbb{N}\setminus\{1\}$ can be represented as the union of sets, each of which is intersection of sets from $\bigcup \{C_n: n\geq 2\}$. In other words, $G=\bigvee_{Sp} \{C_n: n\geq 2\}$.

\vspace{0.5cm}
\emph{Acknowledgments.} The author would like to express the gratitude to J.B.Nation, who has been an inspirational source during all the years we worked on the chapters about join semidistributive lattices, for the volume of Lattice Theory: Special Topics and Applications series initiated by G.~Gr\"atzer. We are also grateful to Masataka Nakamura for the pointer to the paper of N. Wahl \cite{W01}.

\end{document}